\documentclass[a4paper]{article}
\usepackage[T1]{fontenc}
\usepackage[utf8]{inputenc}
\usepackage[margin=27mm]{geometry}
\usepackage{amsmath,amssymb,amsthm}
\usepackage{mathtools}
\usepackage{microtype}
\usepackage{xcolor}
\usepackage[colorlinks=true,linkcolor=blue,citecolor=blue,urlcolor=blue]{hyperref}
\hypersetup{pdftitle={Two-term small-time spectral expansions for controllability Gramians},pdfauthor={Emmanuel Trelat and Enrique Zuazua}}

\def\R{\mathrm{I\kern-0.21emR}}
\newcommand{\C}{\mathbb{C}}

\renewcommand{\geq}{\geqslant}
\renewcommand{\leq}{\leqslant}

\newtheorem{theorem}{Theorem}
\newtheorem{lemma}[theorem]{Lemma}
\newtheorem{corollary}[theorem]{Corollary}
\newtheorem{proposition}[theorem]{Proposition}
\theoremstyle{definition}
\newtheorem{example}[theorem]{Example}
\theoremstyle{remark}
\newtheorem{remark}[theorem]{Remark}

\newcommand{\Id}{\mathrm{Id}}
\newcommand{\tr}{\operatorname{tr}}
\newcommand{\Span}{\operatorname{span}}
\newcommand{\dist}{\operatorname{dist}}
\newcommand{\calK}{\mathcal{K}}
\newcommand{\calP}{\mathcal{P}}
\newcommand{\calC}{\mathcal{C}}
\newcommand{\calL}{\mathcal{L}}
\newcommand{\calE}{\mathcal{E}}
\newcommand{\calF}{\mathcal{F}}
\newcommand{\diag}{\operatorname{diag}}

\title{Two-term small-time spectral expansions for controllability Gramians}

\author{
Emmanuel Tr\'elat\thanks{Sorbonne Universit\'e, Universit\'e Paris Cit\'e, CNRS, Inria, Laboratoire Jacques-Louis Lions (LJLL), F-75005 Paris, France (\texttt{emmanuel.trelat@sorbonne-universite.fr}).}
\and
Enrique Zuazua\thanks{Chair for Dynamics, Control, Machine Learning and Numerics (AvH Professorship), Department of Mathematics, FAU Erlangen-N\"urnberg, 91058 Erlangen, Germany; Chair of Computational Mathematics, Fundaci\'on Deusto, Bilbao, Spain; Departamento de Matem\'aticas, Universidad Aut\'onoma de Madrid, Spain (\texttt{enrique.zuazua@fau.de}).}
}

\date{}

\begin{document}

\maketitle

\begin{abstract}
For a controllable single-input linear system in dimension $n$, the controllability Gramian eigenvalues, ordered decreasingly, are known to have the successive small-time orders $T, T^3, \ldots, T^{2n-1}$. We refine this leading-order hierarchy by explicitly computing the next-order coefficient of every eigenvalue and the first-order variation of its eigendirection and of the nested spectral subspaces. The resulting formulas have intrinsic expressions in the orthonormal Krylov basis: eigenvalue corrections describe the action of the dynamics along each Krylov direction, while variations of the spectral subspaces describe coupling to the next direction. We also establish local joint real-analytic dependence on $(T, A, b)$ near $T=0$ for the eigenvalues divided by their leading powers of $T$, consistently oriented eigenvectors, and spectral projectors. This provides convergent expansions with remainders uniform on compact families of controllable pairs. These results yield refined asymptotics for the worst-case minimum energy required to reach a unit target from the origin, the Gramian determinant and condition number, and the Ornstein--Uhlenbeck Gaussian profile in moving principal coordinates. The energy equals the reciprocal of the smallest Gramian eigenvalue, whose eigendirection identifies the most energy-demanding targets for sufficiently small times. We further derive an exact Gramian-weighted energy identity for the forward Fokker--Planck equation and sharp anisotropic short-time smoothing asymptotics using, respectively, the Lyapunov equation and an exact Fourier norm formula combined with the graded Gramian factorization. For symmetric dynamics, the spectral data reconstruct $A$ and determine $b$ up to its global sign.
\end{abstract}

\medskip
\noindent\textbf{Keywords.} controllability Gramian; small-time asymptotics; Krylov subspace; spectral projector; Ornstein-Uhlenbeck covariance; Fokker-Planck energy identity; inverse problem.

\medskip
\noindent\textbf{MSC 2020.} 15A18, 47A55, 93B60, 93B05, 15A29, 35H10.

\section{Introduction and main result}

\subsection{Problem setting and motivation}\label{intro_setting}

Let $A\in\R^{n\times n}$ and $b\in\R^n$. We consider the single-input linear autonomous control system
$\dot x(t)=Ax(t)+bu(t)$
and assume throughout the spectral analysis that $(A,\allowbreak b)$ satisfies the Kalman rank condition
$$
\operatorname{rank}(b,Ab,\ldots,A^{n-1}b)=n,
\qquad\text{equivalently,}\qquad
\Span(b,Ab,\ldots,A^{n-1}b)=\R^n.
$$
We denote by $\calC$ the open set of controllable pairs. Equivalently, $b$ is a cyclic vector for $A$; see, e.g., \cite{Kailath1980}. We use the Euclidean scalar product $\langle\cdot,\allowbreak \cdot\rangle$ and its associated norm $\Vert\cdot\Vert$ throughout.

For $T>0$, the controllability Gramian is
\begin{equation}\label{gramian}
G_T(A,b)=\int_0^T e^{tA}bb^\top e^{tA^\top}\,dt.
\end{equation}
Equivalently, it solves the differential Lyapunov equation $\partial_TG_T=AG_T+G_TA^\top+bb^\top$ with $G_0=0$.
The Kalman condition ensures that $G_T(A,\allowbreak b)$ is positive definite for every $T>0$. We write its eigenvalues in decreasing order as
$\lambda_1(T)\geq\cdots\geq\lambda_n(T)>0$.
The minimum energy required to steer the origin to $y\in\R^n$ in time $T$ is
$$
\inf_{\substack{u\in L^2(0,T)\\x(0)=0,\ x(T)=y}}
\int_0^T\vert u(t)\vert^2\,dt
=\langle G_T(A,b)^{-1}y,y\rangle,
$$
and its worst value over unit targets is
$$
\sup_{\Vert y\Vert=1}\langle G_T(A,b)^{-1}y,y\rangle
=\frac{1}{\lambda_n(T)};
$$
see, e.g., \cite{MicuZuazua2004,Trelat2024,TucsnakWeiss2009}. Thus the reciprocal of the smallest eigenvalue gives the worst-case minimum control energy, and every unit vector in its eigenspace is a least controllable direction.

The same matrix is the covariance at time $T$ of the degenerate Ornstein--Uhlenbeck process driven by a real Brownian motion,
$dX_t=AX_t\,\allowbreak dt+b\,\allowbreak dW_t,\allowbreak \qquad X_0=0$.
The Kalman condition is then the finite-dimensional form of the H\"ormander condition, and the eigenvalues and eigenvectors of $G_T(A,\allowbreak b)$ are respectively the squared semiaxes and the principal directions of the covariance ellipsoid; see \cite{Habermann2019,Hormander1967,LanconelliPolidoro1994}.

The successive small-time spectral orders are known:
$$
\lambda_k(T)\asymp T^{2k-1},\qquad k\in\{1,\ldots,n\}.
$$
Fast-control asymptotics describe the last layer and limiting norms on the Kalman filtration \cite{Seidman1988,SeidmanYong1997}, while all spectral powers also appear in motion planning \cite{SchmerlingJansonPavone2015}. These leading orders describe the anisotropy of small-time controllability, but do not specify the first drift-dependent correction to each eigenvalue or the first variation of its eigendirection.

We compute explicit intrinsic formulas for the first relative correction to every eigenvalue and the first variation of its eigendirection and of the nested spectral subspaces. The normalized eigenvalues, oriented eigenvectors and flag projectors depend jointly real analytically on $(T,\allowbreak A,\allowbreak b)$ near $T=0$, with convergent expansions and remainders uniform on compact controllable families. The leading exponents, moment constants and limiting Krylov flag are known; the contribution is the simultaneous correction formulas with their analytic and uniform parameter dependence. We compare these results with the literature below.

\subsection{Krylov coordinates and notation}\label{intro_notation}

To state the result, introduce the Krylov filtration
$$
\calK_k(A,b)=\Span(b,Ab,\ldots,A^{k-1}b),\qquad
k\in\{1,\ldots,n\},\qquad \calK_0(A,b)=\{0\}.
$$
Under the Kalman condition, $\dim\calK_k(A,\allowbreak b)=k$, so each layer introduces exactly one new direction. This is a feature of the controllable single-input setting considered here. With several inputs, spectral clusters occur within the quotient layers of the Kalman filtration, leading to a different perturbation problem, discussed among the perspectives at the end of the paper.

For vectors $v_1,\allowbreak \ldots,\allowbreak v_k\in\R^n$, we write
$$
\operatorname{vol}_k(v_1,\ldots,v_k)=\det(\langle v_i,v_j\rangle)_{1\leq i,j\leq k}^{1/2},
$$
and set $\operatorname{vol}_0=1$. Applying Gram-Schmidt to $b,\allowbreak Ab,\allowbreak \ldots,\allowbreak A^{n-1}b$ in this order gives a unique orthonormal basis $(q_1,\allowbreak \ldots,\allowbreak q_n)$ with positive residual coefficients, characterized by
$$
q_k\in\calK_k(A,b),\qquad q_k\perp\calK_{k-1}(A,b),\qquad d_k(A,b)=\langle q_k,A^{k-1}b\rangle>0.
$$
We write $Q(A,\allowbreak b)=(q_1,\allowbreak \ldots,\allowbreak q_n)$ for the resulting orthogonal matrix. Equivalently, $d_1(A,\allowbreak b)=\Vert b\Vert$ and, for $k\in\{2,\allowbreak \ldots,\allowbreak n\}$,
$$
d_k(A,b)=\dist(A^{k-1}b,\calK_{k-1}(A,b))=\frac{\operatorname{vol}_k(b,Ab,\ldots,A^{k-1}b)}{\operatorname{vol}_{k-1}(b,Ab,\ldots,A^{k-2}b)}.
$$
We also introduce
$a_k=\langle q_k,\allowbreak Aq_k\rangle,\allowbreak \qquad k\in\{1,\allowbreak \ldots,\allowbreak n\}$,
and
$$
\beta_k=\langle q_{k+1},Aq_k\rangle=\frac{d_{k+1}(A,b)}{d_k(A,b)}>0,\qquad k\in\{1,\ldots,n-1\},
$$
with the conventions $\beta_0=\beta_n=0$. The last identity follows from $A\calK_k(A,\allowbreak b)\subset\calK_{k+1}(A,\allowbreak b)$. Finally, define the universal constants
\begin{equation}\label{kappa}
\kappa_k=\frac{((k-1)!)^2}{(2k-1)!(2k-2)!},\qquad k\in\{1,\ldots,n\}.
\end{equation}

\subsection{Main result and immediate consequences}\label{intro_results}

With this notation, the main result is as follows.

\begin{theorem}[two-term spectral expansion]\label{thm_main}
Assume that $(A,\allowbreak b)$ satisfies the Kalman condition. There exists $T_0>0$ such that, for every $T\in(0,\allowbreak T_0)$, the eigenvalues satisfy $\lambda_1(T)>\cdots>\lambda_n(T)>0$ and, for every $k\in\{1,\allowbreak \ldots,\allowbreak n\}$, the eigenspace associated with $\lambda_k(T)$ is not orthogonal to $q_k$. On this interval, let $\psi_k(T)$ be the unit eigenvector associated with $\lambda_k(T)$ and oriented by $\langle \psi_k(T),\allowbreak q_k\rangle>0$, and let $\Pi_k(T)$ be the orthogonal projector onto $\Span(\psi_1(T),\allowbreak \ldots,\allowbreak \psi_k(T))$, with $\Pi_0(T)=0$ and $\Pi_n(T)=\Id$.

For every $k\in\{1,\allowbreak \ldots,\allowbreak n\}$, the function $T^{-(2k-1)}\lambda_k(T)$ extends to a real-analytic function on a neighborhood of $T=0$, and
\begin{equation}\label{eigenvalue_expansion}
\lambda_k(T) = \kappa_k\, d_k(A,b)^2\, T^{2k-1}\, (1+a_kT+\mathrm{O}(T^2)),
\end{equation}
as $T\to 0^+$.
For every $k\in\{1,\allowbreak \ldots,\allowbreak n-1\}$, $\Pi_k(T)$ extends real analytically to a neighborhood of $T=0$ and
\begin{equation}\label{projector_expansion}
\Pi_k(T)=\Pi_{\calK_k(A,b)}+\frac{T}{2}\beta_k(q_{k+1}q_k^\top+q_kq_{k+1}^\top)+\mathrm{O}(T^2),
\end{equation}
as $T\to 0^+$.
The eigenvectors may be chosen real analytically on a neighborhood of $T=0$ and satisfy
\begin{equation}\label{eigenvector_expansion}
\psi_k(T)=q_k+\frac{T}{2}(\beta_kq_{k+1}-\beta_{k-1}q_{k-1})+\mathrm{O}(T^2),
\end{equation}
as $T\to 0^+$, where the nonexistent terms for $k=1$ and $k=n$ are omitted. More precisely, near every controllable pair $(A_0,\allowbreak b_0)$, the normalized eigenvalues, the spectral projectors and the oriented eigenvectors extend jointly real analytically in $(T,\allowbreak A,\allowbreak b)$ to a neighborhood of $(0,\allowbreak A_0,\allowbreak b_0)$. For a given compact $K\subset\calC$, there exist $T_K>0$ and an open neighborhood $U_K\subset\calC$ of $K$ such that these extensions are jointly real analytic on $(-T_K,\allowbreak T_K)\times U_K$ and, for every $(A,\allowbreak b)\in K$, their Taylor series in $T$ at zero converge for $\vert T\vert<T_K$. After decreasing $T_K$ if necessary, the eigenvalues are simple and strictly ordered for every $(A,\allowbreak b)\in K$ and every $T\in(0,\allowbreak T_K)$, and all the displayed remainders are uniform on $K$.
\end{theorem}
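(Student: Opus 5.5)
We will work in the orthonormal Krylov frame and rescale the Gramian so that its graded structure becomes visible. Write $\tilde A=Q^\top AQ$ and $\tilde b=Q^\top b$. Since $A\calK_k\subset\calK_{k+1}$, $\tilde A$ is upper Hessenberg with diagonal entries $a_k$ and subdiagonal entries $\beta_k$, and $\tilde b=d_1e_1$. Moreover $Q^\top A^{j}b$ has last nonzero entry $d_{j+1}$ in position $j+1$. Let $D_T=\diag(T^{1/2},T^{3/2},\ldots,T^{n-1/2})$ and set
\[
M(T)=D_T^{-1}Q^\top G_TQD_T^{-1}=\int_0^1 D_T^{-1}e^{sT\tilde A}\tilde b\,\tilde b^\top e^{sT\tilde A^\top}D_T^{-1}\,T\,ds .
\]
Expanding $e^{sT\tilde A}\tilde b=\sum_j (sT)^j\tilde A^j\tilde b/j!$, the $i$-th entry of this vector is a power series in $T$ whose lowest power is $T^{i-1}$, by the Hessenberg structure. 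Hence $T^{-(i-1/2)}$ times the $i$-th entry of $T^{1/2}e^{sT\tilde A}\tilde b$ is an entire function of $(T,s,A,b)$. Consequently $M(T)$ extends to a real-analytic function of $(T,A,b)$, and
\[
M(0)=L\,H\,L,\qquad L=\diag(d_1,\ldots,d_n),\qquad H_{ij}=\frac{1}{(i-1)!(j-1)!(i+j-1)}.
\]
This is a scaled Hilbert-type matrix, hence positive definite. Analyticity in $(A,b)$ near a controllable pair uses the analyticity of Gram–Schmidt, which holds because $d_k>0$ on $\calC$.

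Next we would compute $M'(0)$. The $T^1$ term in entry $i$ of the rescaled vector is $s^{i}\bigl((\tilde A^{i}\tilde b)_i/i!\bigr)$, where $(\tilde A^{i}\tilde b)_i=\sum_{j<i+1}\ldots$. A cleaner way to write this is $(\tilde A^{i}\tilde b)_i=d_i\sum_{m\le i}a_m$, which follows from the Hessenberg recursion; the entries of order $T^0$ are $d_is^{i-1}/(i-1)!$.

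The core reduction is then a graded perturbation lemma. For $G=QD_TMD_TQ^\top$ with $M$ analytic and positive definite, and $D_T$ strongly graded, we would show that $\lambda_k(T)=T^{2k-1}\mu_k(T)$ with $\mu_k$ analytic, and that $\mu_k(0)$ is the Schur-complement ratio $\det M_{[k]}/\det M_{[k-1]}$ of leading minors. The tool is a block-LDL argument (Ostrowski/Schur-complement type): conjugate by a unit lower-triangular analytic matrix that block-diagonalizes successively. Concretely, we would use the Cholesky factorization $M(T)=R^\top R$, which is analytic since the pivots are positive at $T=0$. Then $G$ is congruent to $D_TR^\top RD_T$, and $RD_T=D_T\,(D_T^{-1}RD_T)$, where $D_T^{-1}RD_T$ is upper triangular with off-diagonal entries carrying positive powers of $T$. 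This yields a matrix $D_T^2$-dominated whose eigenproblem decouples level by level via analytic implicit-function (Sylvester) equations.

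This identifies $\mu_k(0)=d_k^2\,\det H_{[k]}/\det H_{[k-1]}$. The Cauchy-determinant evaluation of this ratio gives $\kappa_k$, which we would check against the stated formula \eqref{kappa}. The first-order term $\mu_k'(0)/\mu_k(0)$ is the derivative of the log ratio of leading minors, i.e. $\tr(M_{[k]}^{-1}M'_{[k]})-\tr(M_{[k-1]}^{-1}M'_{[k-1]})$ at $T=0$. The expected result $a_k$ should emerge from a moment/telescoping identity, using $\sum_{m\le i}a_m$ and the differences across levels.

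For the eigenvectors, the leading-order block decoupling gives $\psi_k=Q\bigl(e_k+T\,c_{k+1}e_{k+1}-T\,c_ke_{k-1}\bigr)+\mathrm{O}(T^2)$. The coefficients are obtained from the first off-diagonal correction of the flag decoupling. The $e_{k+1}$ component is $\bigl(M_{[k+1]}^{-1}\bigr)$-type coefficients, produced by the $\beta_k$ entries in $M'(0)$; antisymmetry is forced by orthonormality. The projector formula \eqref{projector_expansion} then follows by summing $\psi_j\psi_j^\top$ for $j\le k$, in which the interior terms telescope.

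The uniformity on compact $K$ follows from compactness and joint analyticity: there is a uniform radius for the implicit-function steps, and Cauchy estimates bound the remainders uniformly. The orientation $\langle\psi_k,q_k\rangle>0$ holds near $T=0$ because $\psi_k\to q_k$.

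The main obstacle is the explicit evaluation of the two first-order constants, namely that the eigenvalue correction is exactly $a_k$ and that the eigenvector coefficient is exactly $\beta_k/2$. These require nontrivial identities for inverses of the Hilbert-type matrices $H_{[k]}$. We would first try to avoid computing them by using an exact symmetry. Replacing $A$ by $A-cI$ multiplies $G_T$ by a factor to first order: $G_T(A,b)=\int_0^T e^{2ct}\ldots$, which gives a relative correction of the form $(2k-1)\ldots$. Combined with the time-reversal identity $e^{TA}G_T(-A,b)e^{TA^\top}=G_T(A,b)$, this should force the linear correction to depend only on the level-$k$ data, and a direct computation for the nilpotent Jordan chain plus diagonal perturbation would then fix the constants.
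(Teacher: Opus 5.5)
Your leading-order machinery is sound and coincides with Lemma~\ref{lem_graded}. Your $M(T)$ is the paper's $N(T,A,b)$, $M(0)=LHL$ is \eqref{moment_factorization}, and your first-order entries $(\tilde A^i\tilde b)_i=d_i(a_1+\cdots+a_i)=d_i\tau_i$ are those of Remark~\ref{rem_minors}.

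\textbf{The gap.} It is the step you flag yourself: nothing in the proposal establishes that the first-order coefficients are exactly $a_k$ and $\beta_k/2$. These constants are the whole content of \eqref{eigenvalue_expansion}--\eqref{eigenvector_expansion} beyond the known leading order.

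\textbf{The shortcut does not work as stated.}
\begin{itemize}
\item $G_T(A-c\Id,b)=\int_0^Te^{-2ct}e^{tA}bb^\top e^{tA^\top}dt$ is not a multiple of $G_T(A,b)$, even to first order. In the Krylov frame the weight changes the $(i,j)$ entry by the relative amount $-2cT\frac{i+j-1}{i+j}$, which depends on the entry.
\item The true effect of the shift is the same factor $1-cT$ on every eigenvalue, not something $(2k-1)$-dependent.
\item A uniform shift yields at most one linear relation among the unknown weights with which $\tau_1,\ldots,\tau_k$ enter.
\item The fallback computation for a Jordan chain plus a diagonal is just the general problem again.
\end{itemize}

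\textbf{Source of $\beta_k$.} Your eigenvector paragraph misplaces it. $M'(0)$ involves only the $\tau_i$; the $\beta$'s enter solely through the $d_i$ already present in $M(0)$. The rotation $\frac{T}{2}\beta_kq_{k+1}$ comes from $M(0)$ through the grading. For the double integrator $M$ is constant, yet $\psi_1=q_1+\frac{T}{2}q_2+\mathrm{O}(T^2)$. Write the top-$k$ invariant subspace as a graph $x\mapsto Zx$ and rescale the Riccati equation. Its $(k+1,k)$ entry is then $T\frac{d_{k+1}}{d_k}x_k+\mathrm{O}(T^2)$, where $x_k$ is the coefficient of $f_{k-1}$ in the $L^2(0,1)$-projection of $f_k$ onto polynomials of degree at most $k-1$, with $f_i(s)=s^i/i!$.

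\textbf{Minor identity.} Reading the eigenvalue coefficient off the log-derivative of the ratio of leading minors also needs \eqref{minor_identity}. That is, cross-level coupling must enter only at relative order $T^2$. The paper derives this from $\mu=a+r^\top(\mu\Id-B)^{-1}r$ in ${\bigwedge}^k$, with $r=\mathrm{O}(T)$ and $B=\mathrm{O}(T^2)$.

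\textbf{The missing idea: parity.} It requires one elementary identity, not formulas for inverse Hilbert matrices.
\begin{itemize}
\item In your trace formula, every $f_i$ with $i<k$ lies in the span of $f_0,\ldots,f_{k-1}$. Hence $\tr(M_{[k]}(0)^{-1}M'_{[k]}(0))=2\tau_kx_k$, with the same $x_k$.
\item The reflection $s\mapsto1-s$ shows that the monic orthogonal polynomial of degree $k$ on $[0,1]$ is a polynomial in $s-\frac12$ of parity $k$. It therefore equals $s^k-\frac{k}{2}s^{k-1}+\cdots$, so $x_k=\frac12$.
\item This is the identity $2\langle F_k,F_k^+\rangle=\Vert F_k\Vert^2$ of Remark~\ref{rem_minors}. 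It delivers $a_k=\tau_k-\tau_{k-1}$ and $\beta_k/2$ at once.
\end{itemize}

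\textbf{The paper's main route.} It centers the time interval: $G_T(A,b)=\widehat G_T(A,e^{TA/2}b)$. For fixed $c$, the normalized spectral data of $\widehat G_T(A,c)$ are even in $T$ (Proposition~\ref{prop_centered}). So the entire first-order term comes from transporting the Krylov data. This gives $a_k$ from $\frac{d}{dT}\log\nu_k(T)^2\vert_{T=0}=\sum_{j\leq k}\langle q_j,Aq_j\rangle$, and $\beta_k/2$ from the derivative of $\Pi_{e^{TA/2}\calK_k(A,b)}$.

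\textbf{Completing your time-reversal idea.} It can work, but it must be paired with $G_T(-A,b)=-G_{-T}(A,b)$ rather than with a scalar shift. Via the analytic extension to signed $T$, this makes the unknown linear coefficients $\gamma_k(A,b)$ odd under $A\mapsto-A$. Then $G_T(A,b)=G_T(-A,e^{TA}b)$, together with $d_k(-A,e^{TA}b)^2=d_k(A,b)^2(1+2a_kT+\mathrm{O}(T^2))$, gives $\gamma_k=2a_k-\gamma_k$. The same argument applied to the flag projectors gives the factor $\frac12\beta_k$.
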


In \eqref{eigenvalue_expansion}, the leading coefficient is the product of the universal moment constant $\kappa_k$ and the squared Krylov height $d_k(A,\allowbreak b)^2$; the relative correction $a_k$ measures the component of $Aq_k$ along $q_k$. Equation~\eqref{projector_expansion} gives the derivative of the $k$-th flag projector as $\frac12\beta_k(q_{k+1}q_k^\top+q_kq_{k+1}^\top)$: the leading spectral subspace moves only towards the next Krylov direction. Thus the formulas distinguish the first changes in squared principal semiaxes and principal directions.

The control energy, determinant and condition number follow directly.

\begin{corollary}[control energy, determinant and conditioning]
Under the assumptions of Theorem~\ref{thm_main}, the worst-case minimum control energy satisfies
\begin{equation}\label{control_energy}
\sup_{\Vert y\Vert=1}\inf_{\substack{u\in L^2(0,T)\\x(0)=0,\ x(T)=y}}\int_0^T\vert u(t)\vert^2dt=\frac{T^{-(2n-1)}}{\kappa_nd_n(A,b)^2}(1-a_nT+\mathrm{O}(T^2)).
\end{equation}
Moreover,
\begin{equation}\label{determinant_expansion}
\det G_T(A,b)=(\det K_b)^2\det\mathfrak H_nT^{n^2}(1+\tr(A)T+\mathrm{O}(T^2)),
\end{equation}
where $K_b=(b,\allowbreak Ab,\allowbreak \ldots,\allowbreak A^{n-1}b)$ and $\mathfrak H_n=(1/((i-1)!(j-1)!(i+j-1)))_{1\leq i,\allowbreak j\leq n}$ is the scaled Hilbert moment matrix. Finally,
$$
\frac{\lambda_1(T)}{\lambda_n(T)}=\frac{d_1(A,b)^2}{\kappa_nd_n(A,b)^2}T^{-2(n-1)}(1+(a_1-a_n)T+\mathrm{O}(T^2)).
$$
\end{corollary}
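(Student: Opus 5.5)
The three claims are read off Theorem~\ref{thm_main}, plus one separate computation for the determinant.

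\emph{Control energy.} By the identity recalled in Section~\ref{intro_setting}, the worst-case energy equals $1/\lambda_n(T)$. We invert \eqref{eigenvalue_expansion} with $k=n$, using $(1+a_nT+\mathrm{O}(T^2))^{-1}=1-a_nT+\mathrm{O}(T^2)$; this gives \eqref{control_energy}. The ratio $\lambda_1/\lambda_n$ is handled the same way. The case $k=1$ of \eqref{eigenvalue_expansion} gives $\lambda_1=\kappa_1d_1^2T(1+a_1T+\mathrm{O}(T^2))$ with $\kappa_1=1$. Dividing and expanding, $(1+a_1T)(1-a_nT)=1+(a_1-a_n)T+\mathrm{O}(T^2)$, and the power of $T$ is $T^{1-(2n-1)}=T^{-2(n-1)}$.

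\emph{Determinant.} Taking the product of the eigenvalue expansions gives
$$
\det G_T=\prod_k\kappa_kd_k^2\;T^{\sum(2k-1)}\Bigl(1+\sum_k a_kT+\mathrm{O}(T^2)\Bigr).
$$
Here $\sum_k(2k-1)=n^2$ and $\sum_k a_k=\tr(Q^\top AQ)=\tr A$. For the constant, $\prod_kd_k=\operatorname{vol}_n(b,\dots,A^{n-1}b)=\vert\det K_b\vert$, by telescoping the volume ratio defining $d_k$. It remains to show $\prod_k\kappa_k=\det\mathfrak H_n$. I would not attempt this from a closed form. Instead I would compute the leading term of $\det G_T$ directly:
$$
G_T=\int_0^T e^{tA}bb^\top e^{tA^\top}dt=K_b\,M_T\,K_b^\top+\text{higher order},
$$
where $M_T$ collects the moments $\int_0^T t^{i+j-2}/((i-1)!(j-1)!)\,dt$. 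Conjugating by $D_T=\diag(T^{1/2},T^{3/2},\dots)$ gives $M_T=D_T\mathfrak H_nD_T$ at leading order, so $\det G_T\sim(\det K_b)^2\det\mathfrak H_n\,T^{n^2}$. Comparing this with the product formula yields the identity $\prod\kappa_k=\det\mathfrak H_n$ without any combinatorics, and it also confirms the leading constant.

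\emph{Main obstacle.} The only delicate point is this constant identification. Higher Taylor terms of $e^{tA}b$ beyond degree $n-1$ are re-expressed in the Krylov basis and carry extra powers of $T$, so they do not affect the leading determinant. This follows from the graded structure: after scaling by $D_T^{-1}$, $G_T$ becomes $K_b\mathfrak H_nK_b^\top+\mathrm{O}(T)$ in suitable coordinates. Since the first-order coefficient $\tr A$ already comes from the eigenvalue product, only this leading term is needed. Alternatively, the $T$-correction can be checked independently through $\frac{d}{dT}\log\det G_T=\tr(G_T^{-1}\partial_TG_T)$ and the Lyapunov equation, which gives $2\tr A+\tr(G_T^{-1}bb^\top)$. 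This serves only as a consistency check.
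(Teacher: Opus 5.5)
Your proposal is correct and follows the paper's proof. The energy and condition-number formulas come from inversion and division of the eigenvalue expansion, and the determinant from multiplying over $k$ with $\sum_k a_k=\tr A$ and $\prod_k d_k=\vert\det K_b\vert$. The only deviation is how you get $\prod_k\kappa_k=\det\mathfrak H_n$: you match against a direct leading-order computation of $\det G_T$ in the basis $K_b$ (valid via Cayley--Hamilton, i.e.\ $D_T^{-1}K_b^{-1}G_TK_b^{-\top}D_T^{-1}=\mathfrak H_n+\mathrm{O}(T)$) instead of citing the Hilbert determinant formula. This works, but it does not actually avoid that formula: the constants $\kappa_k$ in Theorem~\ref{thm_main} come from $\det\mathfrak H_k/\det\mathfrak H_{k-1}=\kappa_k$, and Lemma~\ref{lem_graded} with $k=n$ already gives your leading term directly.
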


\begin{proof}
The first and third formulas follow by inversion and division in Theorem~\ref{thm_main}. Multiplying \eqref{eigenvalue_expansion} over $k$ and using Gram-Schmidt and the moment determinant formula gives
$$
\prod_{k=1}^nd_k(A,b)=\vert\det K_b\vert,\qquad \prod_{k=1}^n\kappa_k=\det\mathfrak H_n,
$$
while $\sum_{k=1}^na_k=\tr(Q^\top AQ)=\tr A$. This proves \eqref{determinant_expansion}.
\end{proof}

Formula \eqref{control_energy} quantifies the first drift-dependent correction to the classical blow-up of the fast-control cost. Formula \eqref{determinant_expansion} is included to show how the individual coefficients recombine: since $\sum_ka_k=\tr A$, it recovers the aggregate correction already known from \cite{BarilariPaoli2017}.

\begin{example}[the double integrator]\label{ex_integrator}
Let
$$
A=\begin{pmatrix}0&1\\0&0\end{pmatrix},\qquad b=\begin{pmatrix}0\\1\end{pmatrix}.
$$
Then $q_1=e_2$, $q_2=e_1$, $d_1(A,\allowbreak b)=d_2(A,\allowbreak b)=1$, $a_1=a_2=0$ and $\beta_1=1$. The Gramian is
$$
G_T(A,b)=\begin{pmatrix}T^3/3&T^2/2\\T^2/2&T\end{pmatrix}.
$$
Consequently,
$$
\lambda_1(T)=T(1+T^2/4+\mathrm{O}(T^4)),\qquad \lambda_2(T)=\frac{T^3}{12}(1-T^2/4+\mathrm{O}(T^4)),
$$
whereas
$$
\psi_1(T)=e_2+\frac{T}{2}e_1+\mathrm{O}(T^2),\qquad \psi_2(T)=e_1-\frac{T}{2}e_2+\mathrm{O}(T^2),
$$
as $T\to 0^+$.
Thus the relative term of order $T$ vanishes for both eigenvalues, but the principal directions already vary at order $T$.
\end{example}

\begin{remark}[Local spectral analyticity]
The Gramian is entire in $T$, whereas Theorem~\ref{thm_main} establishes analyticity of its normalized spectral data only near zero. No quantitative lower bound for $T_K$ is obtained. Degenerating Krylov heights and complex spectral collisions can obstruct parameter-uniform bounds. Even for a fixed pair, the theorem does not exclude eigenvalue collisions at positive times, where ordered rank-one projectors cease to be intrinsic. In the absence of real collisions, complex collisions can still limit the Taylor radius: continuation of a separated projector along a real interval does not imply convergence there of its Taylor series at zero.
\end{remark}

\begin{remark}[complex extension]
Theorem~\ref{thm_main}, Proposition~\ref{prop_centered} below and their proofs extend to controllable pairs $(A,\allowbreak b)\in\C^{n\times n}\times\C^n$. The Gramian is then the Hermitian matrix $G_T(A,\allowbreak b)=\int_0^Te^{tA}bb^*e^{tA^*}dt$; transposes are replaced by adjoints and the Krylov frame is unitary. The normalized eigenvalues, projectors and phase-oriented eigenvectors are real analytic with respect to the real and imaginary parts of the parameters. Fixing the phases by requiring $q_k^*\psi_k(T)>0$ real, the projector and eigenvector formulas are unchanged. The only modification in the eigenvalue formula is $a_k=\operatorname{Re}(q_k^*Aq_k)$, whereas $\beta_k=q_{k+1}^*Aq_k=d_{k+1}(A,\allowbreak b)/d_k(A,\allowbreak b)>0$ remains real and positive by construction. In particular,
$$
\det G_T(A,b)=\vert\det K_b\vert^2\det\mathfrak H_nT^{n^2}(1+\operatorname{Re}(\tr A)T+\mathrm{O}(T^2)).
$$
The graded factorization, the parity argument on the centered interval and the differentiation of Gram determinants apply in the Hermitian setting without further change. This complex spectral version is used in \cite{TrelatZuazua2026actuator} for Schr\"odinger truncations.
\end{remark}

\subsection{Relation to previous work}\label{intro_related}

Beyond the fast-control and motion-planning results cited above, several works describe the leading-order behavior and geometry of controllability Gramians. Small-parameter estimates for the least eigenvalue are developed in \cite{Gusev2019,Gusev2020}, while \cite{Osipov2025} gives a recursive expansion of the Gramian and discusses the resulting eigenvalue asymptotics for second-order systems. A related limit-shape result, with a rate of convergence, is proved for small-time reachable sets under integral control constraints in \cite{GoncharovaOvseevich2016}. Adapted dilations and limiting covariances occur for hypoelliptic diffusions \cite{Habermann2019,LanconelliPolidoro1994}, and the first aggregate correction of the covariance determinant is obtained in \cite{BarilariPaoli2017}. Related near-diagonal hypoelliptic heat-kernel expansions for broader classes of operators are developed in \cite{ColinHillairetTrelat2021,Paoli2017}.

On the spectral-perturbation side, eigenvalue scaling in graded matrices, under conditions ensuring that the spectrum inherits the grading, was analyzed in \cite{StewartZhang1990}. For one-dimensional smooth kernel matrices in the flat limit, QR factorizations and orthogonal polynomials determine limiting eigenspaces \cite{BarthelmeUsevich2021}. A general framework based on successive Schur complements for the leading eigenvalues and limiting eigenspaces of graded symmetric matrices is developed in \cite{UsevichBarthelme2026}. Finally, symmetric-window covariances of regular curves, including parity cancellations and limiting Frenet directions, are studied in \cite{AlvarezVizosoEtAl2019,Solis2000}. Next-to-leading eigenvalue terms and limiting eigendirections have also been computed for local covariance operators of embedded manifolds in \cite{AlvarezVizosoKirbyPeterson2020}. That problem is geometrically distinct: the covariance is formed from a symmetric manifold patch and its coefficients encode curvature tensors, whereas here the singular filtration is generated by a one-sided controllability orbit and the linear term is produced by its transport. The determinant formula behind \eqref{kappa} is the classical Hilbert determinant formula; see \cite{Choi1983,Szego1975}.

The coefficient $\beta_k$ also has a direct hypoelliptic interpretation. For the Ornstein-Uhlenbeck generator $\calL=(Ax)\cdot\nabla+\frac{1}{2}(b\cdot\nabla)^2$, set $Z=(Ax)\cdot\nabla$ and $Y_k=q_k\cdot\nabla$. Since $Aq_k\in\calK_{k+1}(A,\allowbreak b)$,
$$
[Y_k,Z]=\beta_kY_{k+1}+\sum_{j=1}^k\langle q_j,Aq_k\rangle Y_j,\qquad k\in\{1,\ldots,n-1\}.
$$
Thus $a_k$ is the coefficient acting within the $k$-th derivative layer, while $\beta_k$ is the coefficient with which commutation by the drift creates the next Krylov derivative. The corresponding short-time derivative costs $t^{-k+1/2}$ are classical, in the sup-norm framework for an arbitrary drift \cite{Lunardi1997}, and in several $L^2$ frameworks for degenerate Ornstein-Uhlenbeck, Fokker-Planck and quadratic semigroups \cite{AlphonseBernier2020,FarkasLunardi2006,Herau2007,HitrikPravdaStarovViola2017}. We use these estimates only as context. What Theorem~\ref{thm_main} adds is that $a_k$ and $\beta_k$ govern the covariance itself: $a_k$ corrects the $k$-th layer, while $\beta_k$ produces the first rotation of the spectral flag. Proposition~\ref{prop_energy} below gives a complementary exact identity for the adjoint forward equation, for which the drift commutators have the opposite sign.

In particular, the smallest eigenvalue comes from the last Krylov layer, relevant to actuator design, and lies on the scale $T^{2n-1}$. Finite-dimensional scalar actuator design was previously approached through Brunovsk\'y coordinates in \cite{GeshkovskiZuazua2022}. As clarified in \cite{GeshkovskiZuazuaCorrigendum}, the underlying Gramian congruence is exact, but the asserted factorization of the control cost is generally invalid; the resulting horizon-independent optimization remains valid as a surrogate upper-bound problem. The present paper develops the fixed-pair spectral theory independently. The companion preprint \cite{TrelatZuazua2026actuator} applies these expansions to optimization over actuator covariances, sharp small-time rank losses and phase-aligned recovery across control horizons.

Related graded phenomena arise in nonlinear and bilinear small-time controllability: moment problems can yield local controllability in every positive time \cite{BeauchardLaurent2010}, while iterated Lie brackets may lead to quadratic obstructions when the linearization is not controllable \cite{BeauchardMarbach2018}. The present fully controllable finite-dimensional setting is distinct, but gives an explicit spectral realization of an analogous hierarchy.

To our knowledge, the simultaneous two-term Euclidean spectral formulas of Theorem~\ref{thm_main}, together with their joint analytic dependence and compact-family uniformity, do not appear in the cited literature. Besides the leading exponents, moment constants and limiting Krylov flag already mentioned, the centered-interval technique and the first trace correction of the determinant are not claimed as new. Nor do we claim that the next coefficients are inaccessible to general graded perturbation schemes. Here, the moment structure and midpoint parity yield explicit intrinsic formulas for all the first corrections.

\subsection{Proof strategy and organization of the paper}\label{intro_strategy}

The proof starts from the entire Gramian expansion
$$
G_T(A,b)=\sum_{i,j\geq0}\frac{T^{i+j+1}}{i!j!(i+j+1)}A^ibb^\top(A^\top)^j.
$$
The factors $1/(i+j+1)=\int_0^1s^{i+j}\,\allowbreak ds$, weighted by $1/(i!j!)$, form the scaled Hilbert matrix $\mathfrak H_n$. Equivalently, if $G_T=\sum_{m\geq1}T^mC_m$, then $C_1=bb^\top$ and $C_{m+1}=(AC_m+C_mA^\top)/(m+1)$ for $m\geq1$. These formulas determine the matrix Taylor coefficients, but the rank-one leading term only gives $\lambda_1(T)=T\Vert b\Vert^2+\mathrm{O}(T^2)$; resolving the other branches requires the graded analysis.

The component of $e^{tA}b$ along $q_k$ first appears at order $t^{k-1}$, so the $k$-th Krylov diagonal entry of $G_T$ has order $T^{2k-1}$. Lemma~\ref{lem_graded} identifies the positive definite limiting moment matrix and establishes the spectral scales and joint analyticity. Centering the time interval removes the first odd correction; transporting the Krylov frame then produces $a_k$ and $\beta_k$. Midpoint translation is used only as a proof device.

Section~\ref{sec_proof} proves the theorem. Section~\ref{sec_consequences} derives a refined Gaussian profile, records an even expansion for skew-symmetric dynamics, obtains an inverse reconstruction for symmetric dynamics, and finally establishes an exact Gramian-weighted energy identity and sharp directional smoothing estimates for the forward Fokker-Planck equation. Section~\ref{sec_conclusion} delimits the scope of the result: higher-order coefficients, addition of a scalar drift, several inputs and growing dimension.

\section{Proof of the main result}\label{sec_proof}

We prove Theorem~\ref{thm_main} through graded factorization, centering and transport of the Krylov frame.

\subsection{A graded factorization}

The first step establishes the leading orders, simplicity and joint analyticity.

Recall that $\calC$ is the open set of pairs $(A,\allowbreak b)\in\R^{n\times n}\times\R^n$ satisfying the Kalman condition. On $\calC$ the Gram-Schmidt frame $Q=Q(A,\allowbreak b)$ and the heights $d_k(A,\allowbreak b)$ are real analytic. For real $T$, set $\Delta(T)=\diag(1,\allowbreak T,\allowbreak \ldots,\allowbreak T^{n-1})$. For $T>0$, also set
$\Lambda(T)=\diag(T^{1/2},\allowbreak T^{3/2},\allowbreak \ldots,\allowbreak T^{n-1/2})=T^{1/2}\Delta(T)$.
For $k\in\{1,\allowbreak \ldots,\allowbreak n\}$, recall that $\mathfrak H_k=(1/((i-1)!(j-1)!(i+j-1)))_{1\leq i,\allowbreak j\leq k}$ is the scaled Hilbert moment matrix of \eqref{determinant_expansion}, with $\det\mathfrak H_0=1$, so that $\det\mathfrak H_k/\det\mathfrak H_{k-1}=\kappa_k$ by the Hilbert determinant formula.

Hereafter, we use standard exterior-power notation. If $I=\{i_1<\cdots<i_k\}\subset\{1,\allowbreak \ldots,\allowbreak n\}$, set $e_I=e_{i_1}\wedge\allowbreak \cdots\wedge\allowbreak  e_{i_k}$. For a matrix $M$, the induced operator on ${\bigwedge}^k\R^n$ is denoted by ${\bigwedge}^kM$; its $(I,\allowbreak J)$ entry in the basis $(e_I)$ is the minor $\det M_{I,\allowbreak J}$, and its eigenvalues are the products of $k$ eigenvalues of $M$. We write $M_{I,\allowbreak I}$ for the principal submatrix indexed by $I$.

\begin{lemma}[graded factorization]\label{lem_graded}
Extend $G_T(A,\allowbreak b)$ to $T\in\R$ by the same integral as in \eqref{gramian}. There exists a symmetric matrix $N(T,\allowbreak A,\allowbreak b)$, real analytic in a neighborhood of $\{0\}\times\calC$, such that, for every real $T$ near zero,
\begin{equation}\label{graded_factorization}
Q^\top G_T(A,b)Q=T\Delta(T)N(T,A,b)\Delta(T).
\end{equation}
Equivalently, $(Q^\top G_T(A,\allowbreak b)Q)_{ij}=T^{i+j-1}N_{ij}(T,\allowbreak A,\allowbreak b)$. For $T>0$, formula \eqref{graded_factorization} can also be written $Q^\top G_T(A,\allowbreak b)Q=\Lambda(T)N(T,\allowbreak A,\allowbreak b)\Lambda(T)$. Moreover,
$$
N(0,A,b)=\big(\frac{d_i(A,b)d_j(A,b)}{(i-1)!(j-1)!(i+j-1)}\big)_{1\leq i,j\leq n}\succ0.
$$
Equivalently, with $D_d(A,\allowbreak b)=\diag(d_1(A,\allowbreak b),\allowbreak \ldots,\allowbreak d_n(A,\allowbreak b))$,
\begin{equation}\label{moment_factorization}
N(0,A,b)=D_d(A,b)\mathfrak H_nD_d(A,b).
\end{equation}
Consequently, near every point of $\{0\}\times\calC$ and for every $k\in\{1,\allowbreak \ldots,\allowbreak n\}$, the normalized product $T^{-k^2}\prod_{j=1}^k\lambda_j(T)$ extends jointly real analytically in $(T,\allowbreak A,\allowbreak b)$. So do $T^{-(2k-1)}\lambda_k(T)$, the flag projector $\Pi_k(T)$ and the oriented eigenvector $\psi_k(T)$, with
\begin{equation}\label{leading_order}
\begin{aligned}
\lambda_1(T)\cdots\lambda_k(T)&=\Big(\prod_{j=1}^kd_j(A,b)^2\Big)\det\mathfrak H_kT^{k^2}(1+\mathrm{O}(T)),\\
\Pi_k(T)&=\Pi_{\calK_k(A,b)}+\mathrm{O}(T),
\end{aligned}
\end{equation}
as $T\to 0^+$.
In particular, $\lambda_k(T)=\kappa_kd_k(A,\allowbreak b)^2T^{2k-1}(1+\mathrm{O}(T))$, and the eigenvalues are simple for all sufficiently small $T>0$. Finally,
\begin{equation}\label{minor_identity}
\lambda_1(T)\cdots\lambda_k(T)=\det((Q^\top G_T(A,b)Q)_{I_k,I_k})(1+\mathrm{O}(T^2)),
\end{equation}
where $I_k=\{1,\allowbreak \ldots,\allowbreak k\}$. Thus the determinant on the right-hand side is the leading principal minor of order $k$.
All remainders are locally uniform on $\calC$.
\end{lemma}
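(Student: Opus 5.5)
The plan has two parts: first build $N$ from the Taylor expansion of $e^{tA}b$ in the Krylov frame, then read off the spectral statements through exterior powers and Schur-type arguments. Write $c(t)=Q^\top e^{tA}b=\sum_{m\geq0}\frac{t^m}{m!}Q^\top A^mb$. Since $A^mb\in\calK_{m+1}(A,b)$, the $i$-th coordinate $\langle q_i,A^mb\rangle$ vanishes for $m<i-1$ and equals $d_i(A,b)$ for $m=i-1$. Hence $c_i(t)=t^{i-1}\gamma_i(t,A,b)$, with $\gamma_i$ entire in $t$, real analytic on $\calC$, and $\gamma_i(0)=d_i/(i-1)!$. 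The substitution $t=Ts$ in the integral \eqref{gramian}, valid for all real $T$, gives
\[
(Q^\top G_TQ)_{ij}=T^{i+j-1}\int_0^1 s^{i+j-2}\gamma_i(Ts)\gamma_j(Ts)\,ds .
\]
I take this integral as $N_{ij}(T,A,b)$. It is symmetric and real analytic near $\{0\}\times\calC$. At $T=0$ it equals $d_id_j/((i-1)!(j-1)!(i+j-1))$, which is exactly \eqref{moment_factorization}. Positive definiteness holds because $\mathfrak H_n$ is the Gram matrix of the linearly independent monomials $s^{i-1}/(i-1)!$ in $L^2(0,1)$, and the $d_i$ are positive. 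For $T>0$, the form $\Lambda N\Lambda$ is immediate since $T\Delta N\Delta=\Lambda N\Lambda$.

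For the spectral consequences I use exterior powers. Let $M(T)=Q^\top G_TQ$, which has the same eigenvalues as $G_T$, and let $M(T)_{I,J}$ denote its submatrix with rows $I$ and columns $J$ (the paper's $M_{I,J}$). The minors of $M(T)$ are $\det M(T)_{I,J}=T^{|I|+\sigma(I)+\sigma(J)-|I|}\det N_{I,J}$, with $\sigma(I)=\sum_{i\in I}i$ and $|I|=k$. In the basis $(e_I)$, the operator ${\bigwedge}^kM(T)$ is therefore graded with weights $T^{\sigma(I)-k/2}$. The minimal weight occurs uniquely at $I_k$, with exponent $k^2$, and every other $I$ has $\sigma(I)\geq\sigma(I_k)+1$. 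So $T^{-k^2}{\bigwedge}^kM(T)$ is a real-analytic matrix whose limit at $T=0$ is the rank-one matrix $\det N_{I_k,I_k}(0)\,e_{I_k}e_{I_k}^\top$, where the coefficient is positive.

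The step I expect to be the main obstacle is this one: turning the rank-one limit into analyticity of the top eigenvalue of $T^{-k^2}{\bigwedge}^kM(T)$, and pinning down the error in \eqref{minor_identity} as $\mathrm{O}(T^2)$ rather than $\mathrm{O}(T)$. Analyticity of the top eigenvalue and of its rank-one projector follows from analytic perturbation of a simple eigenvalue, since all other eigenvalues tend to $0$; this top eigenvalue is $T^{-k^2}\lambda_1\cdots\lambda_k$. The projector is the Plücker image of $\Pi_k(T)$, so $\Pi_k$ is analytic and tends to $\Pi_{\calK_k}$. Dividing consecutive products gives analyticity of $T^{-(2k-1)}\lambda_k$ with limit $\kappa_kd_k^2$, which yields simplicity for small $T$. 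Oriented eigenvectors are then obtained as normalized $(\Pi_k-\Pi_{k-1})q_k$. To get the $\mathrm{O}(T^2)$ error, I would use a Schur complement in ${\bigwedge}^k$ coordinates: the top eigenvalue equals the $(I_k,I_k)$ entry plus $\sum_{I}|\text{entry}_{I_k,I}|^2/(\text{top})+\dots$. Off-diagonal entries coupling $I_k$ to $I$ carry a relative factor $T^{(\sigma(I)-\sigma(I_k))/2}$ after symmetric scaling. The quadratic correction is therefore of order $T^{\sigma(I)-\sigma(I_k)}\geq T$ relative, which at first sight gives only $\mathrm{O}(T)$.

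The refinement would come from a better bound on the correction: it has size about $T^{2(\sigma(I)-\sigma(I_k))}/T^{0}$ relative to the diagonal when measured against the diagonal scale of the $I$ block. More concretely, I would conjugate by the diagonal weights. The top eigenvalue $\mu$ of the scaled symmetric matrix $S$ satisfies
\[
\mu=S_{00}+\sum_I S_{0I}^2/(\mu-S_{II})+\dots,
\]
where $S_{0I}=\mathrm{O}(T^{\delta_I/2}\cdot T^{\delta_I/2})$ and $\delta_I=\sigma(I)-\sigma(I_k)\geq1$. This makes the correction $\mathrm{O}(T^{2\delta_I})=\mathrm{O}(T^2)$, which would give \eqref{minor_identity}. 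I would check the scaling carefully here, since it is where an off-by-one error is most likely. Local uniformity of all remainders on $\calC$ follows from joint analyticity in $(T,A,b)$.
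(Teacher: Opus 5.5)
Your proposal is correct and follows essentially the paper's own proof: the same $L^2(0,1)$ Gram representation of $N$, the same graded exterior power $T^{-k^2}{\bigwedge}^kM(T)=D\,{\bigwedge}^kN\,D$ with rank-one limit at $T=0$, analytic perturbation of its simple top eigenvalue, division of consecutive levels, and normalization of $(\Pi_k-\Pi_{k-1})q_k$ for the oriented eigenvector. The $\mathrm{O}(T)$ worry in your last step is unfounded, because in $D\,{\bigwedge}^kN\,D$ the coupling column is already $r=\mathrm{O}(T)$ and the complementary block is $B=\mathrm{O}(T^2)$, so the exact Schur complement equation $\mu=a+r^\top(\mu\Id-B)^{-1}r$ gives $\mu=a+\mathrm{O}(T^2)$, exactly as in the paper. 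Two small corrections: the minor exponent should read $\sigma(I)+\sigma(J)-k$, and the paper makes your Plücker inversion concrete through the quadratic formula $\Pi_E=(\langle\iota_{e_i}\omega,\iota_{e_j}\omega\rangle)_{i,j}$.
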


\begin{proof}
Set $\varphi_j(T,\allowbreak s)=T^{-(j-1)}\langle q_j,\allowbreak e^{TsA}b\rangle$ for $s\in(0,\allowbreak 1)$. Since $\langle q_j,\allowbreak A^mb\rangle=0$ for $m\leq j-2$, the series
$$
\langle q_j,e^{TsA}b\rangle=\sum_{m\geq j-1}\frac{(Ts)^m}{m!}\langle q_j,A^mb\rangle
$$
is divisible by $T^{j-1}$, so $\varphi_j$ is real analytic in $(T,\allowbreak A,\allowbreak b)$ with values in $L^2(0,\allowbreak 1)$ and $\varphi_j(0,\allowbreak \cdot)=d_js^{j-1}/(j-1)!$. The change of variable $t=Ts$, which is valid for signed $T$, gives
$(Q^\top G_TQ)_{ij}=T^{i+j-1}\langle \varphi_i,\allowbreak \varphi_j\rangle_{L^2(0,\allowbreak 1)}$.
This is \eqref{graded_factorization} with $N_{ij}=\langle \varphi_i,\allowbreak \varphi_j\rangle_{L^2(0,\allowbreak 1)}$. The matrix $N(0,\allowbreak A,\allowbreak b)$ is the Gram matrix of the linearly independent functions $d_js^{j-1}/(j-1)!$, hence is positive definite, and its entries give \eqref{moment_factorization}.

Fix $k$ and, for $I=\{i_1<\cdots<i_k\}$, let $\delta(I)=\sum_{r=1}^k(i_r-r)$, so that $\delta(I)\geq0$ with equality only for $I=I_k$. Taking minors in the entrywise identity gives, for every pair $(I,\allowbreak J)$, the factor $T^{k^2+\delta(I)+\delta(J)}$. Hence
$$
{\bigwedge}^k(Q^\top G_TQ)=T^{k^2}R(T,A,b),\qquad R=D\,{\bigwedge}^kN\,D,\qquad D(T)=\diag(T^{\delta(I)}).
$$
The family $R$ is symmetric and real analytic, and $R(0,\allowbreak A,\allowbreak b)=\det N(0,\allowbreak A,\allowbreak b)_{I_k,\allowbreak I_k}\,\allowbreak e_{I_k}e_{I_k}^\top$, whose largest eigenvalue $\det N(0,\allowbreak A,\allowbreak b)_{I_k,\allowbreak I_k}>0$ is simple. A simple eigenvalue of a real-analytic symmetric family, together with its spectral projector and a locally oriented unit eigenvector, depends real analytically on all the parameters; see \cite{Kato1995}. For $T>0$, the eigenvalues of the $k$-th exterior power are the products of $k$ eigenvalues of $Q^\top G_TQ$, and the largest one is $\lambda_1(T)\cdots\lambda_k(T)$. Hence this simple eigenvalue branch is $T^{-k^2}\lambda_1(T)\cdots\lambda_k(T)$. This proves the asserted analytic extension, and $\det N(0,\allowbreak A,\allowbreak b)_{I_k,\allowbreak I_k}=(\prod_{j=1}^kd_j^2)\det\mathfrak H_k$ gives the first identity in \eqref{leading_order}. Dividing consecutive levels yields the analyticity of $T^{-(2k-1)}\lambda_k(T)$ and the leading term $\kappa_kd_k^2T^{2k-1}$. Since the exponents $2k-1$ are pairwise distinct, the eigenvalues are simple for small $T>0$.

Let $\omega_k(T)$ be the corresponding analytic unit eigenvector of $R$, oriented by $\langle\omega_k(T),\allowbreak e_{I_k}\rangle>0$. For $T>0$, it equals $(Q^\top\psi_1(T))\wedge\allowbreak \cdots\wedge\allowbreak (Q^\top\psi_k(T))$ up to sign and is therefore decomposable. If $\omega$ is a unit decomposable $k$-vector representing a $k$-plane $E$, then, denoting by $\iota_v$ the contraction by $v$, the matrix $(\langle\iota_{e_i}\omega,\allowbreak \iota_{e_j}\omega\rangle)_{1\leq i,\allowbreak j\leq n}$ equals $\Pi_E$. It is a quadratic function of $\omega$ and is insensitive to its sign. Applied to $E_k(T)=\Span(Q^\top\psi_1(T),\allowbreak \ldots,\allowbreak Q^\top\psi_k(T))$, this gives $\Pi_k(T)=Q\Pi_{E_k(T)}Q^\top$ and proves its analyticity. Since $\omega_k(0)=e_{I_k}$, one obtains $\Pi_k(0)=\Pi_{\calK_k(A,\allowbreak b)}$. The identities $\Pi_{k-1}\Pi_k=\Pi_{k-1}$ and $(\Pi_k-\Pi_{k-1})^2=\Pi_k-\Pi_{k-1}$, valid for $T>0$, persist throughout the analytic neighborhood by the identity principle. Thus $P_k(T)=\Pi_k(T)-\Pi_{k-1}(T)$ is an analytic rank-one projector near zero, and $\psi_k(T)=P_k(T)q_k/\Vert P_k(T)q_k\Vert$ is well defined, analytic and satisfies $\langle\psi_k(T),\allowbreak q_k\rangle>0$. This proves the claimed analyticity of the oriented eigenvector.

Finally, for $k=n$, identity \eqref{minor_identity} is exact. If $k<n$, write $R$ in the decomposition $\R e_{I_k}\oplus e_{I_k}^\perp$ as
$$
R=\begin{pmatrix}a&r^\top\\r&B\end{pmatrix}.
$$
Here $a=\det N(T,\allowbreak A,\allowbreak b)_{I_k,\allowbreak I_k}=T^{-k^2}\det((Q^\top G_TQ)_{I_k,\allowbreak I_k})$ tends to a positive number, while $r=\mathrm{O}(T)$ and $B=\mathrm{O}(T^2)$. If $\mu(T)$ denotes the simple largest eigenvalue of $R(T)$, its Schur complement equation is $\mu=a+r^\top(\mu\Id-B)^{-1}r$, and therefore $\mu=a+\mathrm{O}(T^2)$. This proves \eqref{minor_identity}.

Around each point $(0,\allowbreak A_0,\allowbreak b_0)\in\{0\}\times\calC$, choose a product neighborhood on which all the local power-series representations converge, and shrink its parameter factor if necessary. The local branches agree on overlaps: for each fixed $(A,\allowbreak b)$ in a parameter overlap, they coincide with the ordered spectral data for all sufficiently small $T>0$, and the one-variable identity principle in $T$ therefore makes them agree throughout the common $T$-interval. Select finitely many such parameter neighborhoods covering a compact set $K\subset\calC$, let $U_K$ be their union and let $T_K$ be the minimum of their $T$-radii. This also gives the uniform remainders asserted in Theorem~\ref{thm_main}. The leading expansions, whose coefficients are bounded away from zero on $K$, then give the uniform strict ordering for $T\in(0,\allowbreak T_K)$ after decreasing $T_K$ if necessary.
\end{proof}

Lemma~\ref{lem_graded} already contains the leading order and the convergence to the Krylov flag, both of which are known, as recalled in the introduction. What it does not contain is the coefficient of $T$ in \eqref{leading_order}, which is the object of the rest of this section.

\begin{remark}\label{rem_minors}
Identity \eqref{minor_identity} states that, in the Krylov basis, the leading principal minors of the Gramian compute the partial products of its eigenvalues up to a relative error $\mathrm{O}(T^2)$. For $k=n$ the identity is exact, both sides being $\det G_T(A,\allowbreak b)$. It reduces the eigenvalue part of Theorem~\ref{thm_main} to the differentiation of Gram determinants and also indicates how further coefficients may be generated, at the price of the higher-order perturbation terms discarded here.

To identify the linear coefficient directly, set $A_Q=Q^\top AQ$ and $\tau_j=a_1+\cdots+a_j$. Since $A_Q$ is upper Hessenberg, $Q^\top b=d_1(A,\allowbreak b)e_1$, $(A_Q)_{kk}=a_k$ and $(A_Q)_{k+1,\allowbreak k}=\beta_k$, every path of length $j$ from the index $1$ to the index $j$ consists of $j-1$ moves from $\ell$ to $\ell+1$ and one diagonal step. Hence
$\langle q_j,\allowbreak A^jb\rangle=d_j(A,\allowbreak b)\tau_j,\allowbreak \qquad j\in\{1,\allowbreak \ldots,\allowbreak n\}$.
If $f_i(s)=s^i/i!$, $F_k=f_0\wedge\allowbreak \cdots\wedge\allowbreak  f_{k-1}$ and $F_k^+=f_0\wedge\allowbreak \cdots\wedge\allowbreak  f_{k-2}\wedge\allowbreak  f_k$, with $F_1^+=f_1$, then the functions in the proof of Lemma~\ref{lem_graded} satisfy $\varphi_j/d_j(A,\allowbreak b)=f_{j-1}+T\tau_jf_j+\mathrm{O}(T^2)$. In the first variation of $\varphi_1\wedge\allowbreak \cdots\wedge\allowbreak \varphi_k$, only the last substitution survives, since every earlier one duplicates a factor. Moreover $kF_k^+$ is the image of $F_k$ under the derivation induced by the multiplication by $s$, while the reflection $s\mapsto1-s$ induces a unitary map of $L^2(0,\allowbreak 1)$ which preserves the polynomials of degree at most $k-1$, fixes $F_k$ up to sign, and turns that multiplication into $\Id-s$; hence $2\langle F_k,\allowbreak F_k^+\rangle_{L^2(0,\allowbreak 1)}=\Vert F_k\Vert_{L^2(0,\allowbreak 1)}^2$. It follows that the relative coefficient of the leading principal $k$-minor is $\tau_k$. Formula~\eqref{minor_identity}, followed by division of the products at levels $k$ and $k-1$, gives the coefficient $a_k$ in \eqref{eigenvalue_expansion} directly. Thus centering is not needed for the eigenvalue coefficient; it remains essential for the order $\mathrm{O}(T^2)$ of the projectors in \eqref{centered_projectors}.
\end{remark}

\subsection{Centering and the gain of one order}

For $c\in\R^n$, define the covariance on the symmetric time interval
\begin{equation}\label{centered_gramian}
\widehat{G}_T(A,c)=\int_{-T/2}^{T/2}e^{tA}cc^\top e^{tA^\top}dt.
\end{equation}
Here and below, ``centered'' refers to the time interval, not to subtraction of the mean of the curve $t\mapsto e^{tA}c$.
The change of variable $t=s+T/2$ gives the exact identity
\begin{equation}\label{midpoint_identity}
G_T(A,b)=\widehat{G}_T(A,b_T),\qquad b_T=e^{TA/2}b.
\end{equation}
For fixed $c$, the matrix $T^{-1}\widehat{G}_T(A,\allowbreak c)$ is an even real-analytic function of $T$. The following intermediate result makes the spectral consequence of this parity precise.

\begin{proposition}[centered covariance]\label{prop_centered}
Assume that $(A,\allowbreak c)$ satisfies the Kalman condition. For $T>0$ sufficiently small, let $\widehat\lambda_k(T)$ denote the decreasingly ordered eigenvalues of \eqref{centered_gramian}, let $\widehat\Pi_k(T)$ project onto the first $k$ eigenvectors, and choose the individual unit eigenvectors $\widehat\psi_k(T)$ so that $\langle\widehat\psi_k(T),\allowbreak q_k(A,\allowbreak c)\rangle>0$. Then, as $T\to0^+$,
$$
\widehat\lambda_k(T)=\kappa_kd_k(A,c)^2T^{2k-1}(1+\mathrm{O}(T^2)),
$$
\begin{equation}\label{centered_projectors}
\widehat\Pi_k(T)=\Pi_{\calK_k(A,c)}+\mathrm{O}(T^2),\qquad \widehat\psi_k(T)=q_k(A,c)+\mathrm{O}(T^2).
\end{equation}
After division by $T^{2k-1}$, each eigenvalue branch extends to a real-analytic function of $T^2$ near zero; the rank-one eigenprojectors and the oriented eigenvectors have analogous extensions. The estimates are uniform on compact subsets of the set of controllable pairs.
\end{proposition}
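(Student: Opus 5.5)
The plan is to redo the graded factorization of Lemma~\ref{lem_graded} for the centered covariance and exploit parity in $T$. In the Krylov frame $Q=Q(A,c)$ set
$$\widehat\varphi_j(T,s)=T^{-(j-1)}\langle q_j,e^{TsA}c\rangle,\qquad s\in(-\tfrac12,\tfrac12).$$
Since $\langle q_j,A^mc\rangle=0$ for $m\leq j-2$, each $\widehat\varphi_j$ is real analytic in $(T,A,c)$ with values in $L^2(-\frac12,\frac12)$. The change of variable $t=Ts$ gives
$$Q^\top\widehat G_TQ=T\Delta(T)\widehat N(T)\Delta(T),\qquad \widehat N_{ij}=\langle\widehat\varphi_i,\widehat\varphi_j\rangle_{L^2(-1/2,1/2)}.$$
The key observation is a parity identity: $\widehat\varphi_j(-T,s)=(-1)^{j-1}\widehat\varphi_j(T,-s)$. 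Because the interval is symmetric, this yields $\widehat N(-T)=S\widehat N(T)S$ with $S=\diag((-1)^{j-1})$. Hence $\widetilde N(T)=\widehat N(T)$ is conjugate by the fixed sign matrix $S$ to its value at $-T$; equivalently, $\widehat N_{ij}$ is even in $T$ when $i+j$ is even and odd when $i+j$ is odd. At $T=0$ the matrix $\widehat N(0)$ is the Gram matrix of $d_js^{j-1}/(j-1)!$ on the centered interval, which is positive definite. Only the diagonal moments affect the leading data, since the exterior-power argument isolates $e_{I_k}$.

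Next, I will run the exterior-power argument of Lemma~\ref{lem_graded} verbatim with $\widehat N$ in place of $N$. This gives analyticity in $(T,A,c)$ of $T^{-k^2}\widehat\lambda_1\cdots\widehat\lambda_k$, of $T^{-(2k-1)}\widehat\lambda_k$, of the flag projectors $\widehat\Pi_k$, and of the oriented eigenvectors $\widehat\psi_k$, together with local uniformity on compacts by the same covering argument.

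For the leading constants, note that the $k$-minor of the centered moment matrix equals the uncentered one. Translating $s\mapsto s+\frac12$ is a unimodular triangular change on the polynomials of degree less than $k$, so Gram determinants are preserved. Thus $\widehat\lambda_k\sim\kappa_kd_k^2T^{2k-1}$.

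For the parity conclusion, I use the symmetry of the graded matrix itself. We have $\widehat G_{-T}=-\widehat G_T$ from the integral, so $T^{-1}\widehat G_T$ is even, which is the stated fact. Hence $E(T)=T^{-1}Q^\top\widehat G_TQ$ is even in $T$. The analytic extensions are spectral data of this matrix, rescaled: the product branch is the simple top eigenvalue of $T^{-k^2}\bigwedge^k(Q^\top\widehat G_TQ)$. Since $\bigwedge^k(TE)=T^k\bigwedge^kE$, this operator equals $T^{k-k^2}\bigwedge^kE(T)$. For $T>0$ its top eigenvalue is $T^{k-k^2}$ times the top eigenvalue of $\bigwedge^kE(T)$, which is even in $T$. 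The analytic extension to $T<0$ must therefore match the even function, by the identity principle applied to the analytic branch. I expect this step to be the main obstacle: eigenvalues of $E(T)$ for $T<0$ are only ordered up to sign issues.

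The cleanest route is to note that the spectral projectors $\widehat\Pi_k(T)$ are analytic near $0$ and are spectral projectors of $E(T)$ for $T>0$. Their analytic continuations $\widehat\Pi_k(-T)$ are then invariant subspaces of $E(-T)=E(T)$. The eigenvalues of $E(T)$ are distinct and separated at scales $T^{2k-2}$, and the invariant projectors of $E(T)$ form a finite set. By continuity and the limits at $T=0$, it follows that $\widehat\Pi_k(-T)=\widehat\Pi_k(T)$. Thus $\widehat\Pi_k$ is even, so its expansion has no $T$ term, which gives $\widehat\Pi_k=\Pi_{\calK_k}+\mathrm{O}(T^2)$. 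The rank-one projectors $\widehat\Pi_k-\widehat\Pi_{k-1}$ are then even as well, and so are the oriented eigenvectors $\widehat\psi_k=P_kq_k/\Vert P_kq_k\Vert$, giving $\widehat\psi_k=q_k+\mathrm{O}(T^2)$. Finally, $T^{-(2k-1)}\widehat\lambda_k=T^{-(2k-2)}\langle E\widehat\psi_k,\widehat\psi_k\rangle$ is even, hence a function of $T^2$, which yields the $1+\mathrm{O}(T^2)$ factor. Uniformity on compacts follows as in Lemma~\ref{lem_graded}.
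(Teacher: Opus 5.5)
Your proof is correct, but it reaches the $\mathrm{O}(T^2)$ statements by a different mechanism from the paper's.

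\textbf{The paper's route.} The paper first does an explicit first-order computation. It expands $g_j=T^{j-1}(d_js^{j-1}/(j-1)!+T\alpha_js^j/j!+\mathrm{O}(T^2))$ and notes that $\overline F_k$ and $\overline F_k^+$ have opposite parity on $[-1/2,1/2]$. This kills both the linear term of the leading minor and the only potentially order-$T$ off-diagonal entry of ${\bigwedge}^kM(T)$. A principal-angle comparison then turns the exterior eigenline estimate into the projector estimate. Separately, analyticity in $T^2$ comes from $\widehat N(-T)=\Sigma\widehat N(T)\Sigma$, which gives $\widehat R(-T)=\widehat R(T)$. The simple top eigenvalue branch and its eigenline are then automatically even.

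\textbf{Your route.} You skip the explicit computation and get every $\mathrm{O}(T^2)$ claim from evenness plus the leading-order limits of Lemma~\ref{lem_graded}. For evenness of $\widehat\Pi_k$ you use $\widehat G_{-T}=-\widehat G_T$ and a rigidity argument:
\begin{itemize}
\item by the identity principle, the continued $\widehat\Pi_k(-T)$ is a rank-$k$ orthogonal projector commuting with $E(T)$;
\item $E(T)$ has simple spectrum, so $\widehat\Pi_k(-T)$ is one of finitely many sums of eigenprojectors;
\item continuity and the limit $\Pi_{\calK_k(A,c)}$ select $\widehat\Pi_k(T)$.
\end{itemize}
This is valid and more economical.

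\textbf{The step you flagged as the main obstacle.} It is immediate, and it is exactly the paper's mechanism. $E(T)=E(-T)$ is positive definite for all $T\neq0$, so there is no sign issue. Since $k-k^2$ is even, $\widehat R(T)=T^{k-k^2}{\bigwedge}^kE(T)$ is itself even; this also follows from your identity $\widehat N(-T)=S\widehat N(T)S$. So the product branch and the top exterior eigenline are even without any finite-set argument.

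\textbf{What each buys.} The paper's explicit computation also exposes the cancellation mechanism, the orthogonality of $\overline F_k$ and $\overline F_k^+$, which Remark~\ref{rem_minors} reuses. Your route replaces it with a softer analytic-rigidity argument.

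\textbf{Minor write-up points.}
\begin{itemize}
\item Conjugate by $Q$ when moving between $\widehat\Pi_k,\widehat\psi_k$ (original coordinates) and $E$ (Krylov coordinates).
\item For the eigenvalue branch $h(T)$, first establish $T^{2k-2}h(T)=\langle E(T)Q^\top\widehat\psi_k(T),Q^\top\widehat\psi_k(T)\rangle$ on the whole interval via the identity principle. Only then conclude that $h$ is even.
\end{itemize}
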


We give a self-contained proof because the order $T^2$ in \eqref{centered_projectors} is essential below. We use the standard exterior power of a Hilbert space: if $f_1,\allowbreak \ldots,\allowbreak f_k\in L^2(-1/2,\allowbreak 1/2)$, then $f_1\wedge\allowbreak \cdots\wedge\allowbreak  f_k$ denotes their alternating tensor product and
$$
\Vert f_1\wedge\cdots\wedge f_k\Vert^2=\det(\langle f_i,f_j\rangle_{L^2})_{1\leq i,j\leq k}.
$$
We shall also use the following standard comparison. If $E$ and $F$ are two $k$-dimensional subspaces and $0\leq\Theta_1\leq\cdots\leq\Theta_k\leq\pi/2$ are their principal angles, defined by $\cos\Theta_i=\sigma_i(U^\top V)$, where $U$ and $V$ are orthonormal frames of $E$ and $F$ and the singular values $\sigma_i$ are ordered decreasingly, then
$$
\Vert\Pi_E-\Pi_F\Vert=\max_i\sin\Theta_i,\qquad \big\Vert\Pi_{{\bigwedge}^kE}-\Pi_{{\bigwedge}^kF}\big\Vert=\Big(1-\prod_{i=1}^k\cos^2\Theta_i\Big)^{1/2}.
$$
Thus these two quantities are comparable, with constants depending only on $k$; see \cite{HornJohnson2013}.

\begin{proof}
Let $(q_1(A,\allowbreak c),\allowbreak \ldots,\allowbreak q_n(A,\allowbreak c))$ be the orthonormal Krylov basis associated with $(A,\allowbreak c)$ and set $M(T)=Q^\top\widehat{G}_T(A,\allowbreak c)Q$, where $Q=(q_1(A,\allowbreak c),\allowbreak \ldots,\allowbreak q_n(A,\allowbreak c))$. For $s\in[-1/2,\allowbreak 1/2]$, set $g_j(T,\allowbreak s)=\langle q_j(A,\allowbreak c),\allowbreak e^{TsA}c\rangle$.
Orthogonality to $\calK_{j-1}(A,\allowbreak c)$ and Taylor's formula give, in $L^2(-1/2,\allowbreak 1/2)$,
\begin{equation}\label{centered_coordinates}
g_j(T,s)=T^{j-1}(d_j(A,c)\frac{s^{j-1}}{(j-1)!}+T\alpha_j\frac{s^j}{j!}+T^2r_j(T,s)),
\end{equation}
where $\alpha_j=\langle q_j(A,\allowbreak c),\allowbreak A^jc\rangle$ and the remainders are uniformly bounded on compact controllable families. Since $M_{ij}(T)=T\langle g_i(T),\allowbreak g_j(T)\rangle_{L^2}$, the principal minors of $M(T)$ are Gram determinants, while arbitrary minors are the corresponding cross-Gram determinants.

Let $\overline f_j(s)=s^j/j!$ on $[-1/2,\allowbreak 1/2]$ and let
$$
\overline H_k=\big(\int_{-1/2}^{1/2}\frac{s^{i+j}}{i!j!}\,ds\big)_{0\leq i,j\leq k-1},\qquad \det\overline H_0=1.
$$
Set $\overline F_k=\overline f_0\wedge\allowbreak \cdots\wedge\allowbreak \overline f_{k-1}$ and $\overline F_k^+=\overline f_0\wedge\allowbreak \cdots\wedge\allowbreak \overline f_{k-2}\wedge\allowbreak \overline f_k$, with $\overline F_1^+=\overline f_1$. In the first-order term of $g_1\wedge\allowbreak \cdots\wedge\allowbreak  g_k$, replacing any factor before the last one duplicates the next monomial and gives zero. Hence \eqref{centered_coordinates} yields
$$
g_1(T)\wedge\cdots\wedge g_k(T)=T^{k(k-1)/2}\Big(\prod_{j=1}^kd_j(A,c)\Big)\big(\overline F_k+T\frac{\alpha_k}{d_k(A,c)}\overline F_k^++\mathrm{O}(T^2)\big).
$$
The two exterior products $\overline F_k$ and $\overline F_k^+$ have opposite parity and are therefore orthogonal. It follows that the leading principal $k$-minor satisfies
$$
\det M(T)_{\{1,\ldots,k\}}=\Big(\prod_{j=1}^kd_j(A,c)^2\Big)\det\overline H_kT^{k^2}(1+\mathrm{O}(T^2)).
$$

The same argument controls the associated eigenspace. For $I=\{i_1<\cdots<i_k\}$, set $g_I=g_{i_1}\wedge\allowbreak \cdots\wedge\allowbreak  g_{i_k}$ and $\delta(I)=\sum_{r=1}^k(i_r-r)$. In the canonical coordinates of $M(T)$, one has $({\bigwedge}^kM(T))_{I,\allowbreak J}=T^k\langle g_I(T),\allowbreak g_J(T)\rangle$, and this coefficient is of order $T^{k^2+\delta(I)+\delta(J)}$. When $k<n$, the only off-diagonal coefficient that could have relative order $T$ joins $I_k=\{1,\allowbreak \ldots,\allowbreak k\}$ to $\{1,\allowbreak \ldots,\allowbreak k-1,\allowbreak k+1\}$; its leading scalar product again couples functions of opposite parity and vanishes. Therefore, after identifying the canonical basis with the Krylov basis through $Q$, in the decomposition generated first by $q_1(A,\allowbreak c)\wedge\allowbreak \cdots\wedge\allowbreak  q_k(A,\allowbreak c)$ and then by its orthogonal complement,
\begin{equation}\label{exterior_block}
{\bigwedge}^kM(T)=C_k(A,c)T^{k^2}\begin{pmatrix}1+\mathrm{O}(T^2)&\mathrm{O}(T^2)\\\mathrm{O}(T^2)&\mathrm{O}(T^2)\end{pmatrix},
\end{equation}
where $C_k(A,\allowbreak c)=\Big(\prod_{j=1}^kd_j(A,\allowbreak c)^2\Big)\det\overline H_k$. The largest eigenvalue of \eqref{exterior_block} is $C_k(A,\allowbreak c)T^{k^2}(1+\mathrm{O}(T^2))$ and its eigenline is at distance $\mathrm{O}(T^2)$ from $q_1(A,\allowbreak c)\wedge\allowbreak \cdots\wedge\allowbreak  q_k(A,\allowbreak c)$. For $T>0$, this eigenvalue is $\widehat\lambda_1(T)\cdots\widehat\lambda_k(T)$. Dividing by the formula at level $k-1$ gives
\begin{equation}\label{centered_ratio}
\widehat\lambda_k(T)=d_k(A,c)^2\frac{\det\overline H_k}{\det\overline H_{k-1}}T^{2k-1}(1+\mathrm{O}(T^2)).
\end{equation}
The comparison just recalled therefore converts the exterior-eigenline estimate into the projector formula in \eqref{centered_projectors}. Subtracting consecutive flag projectors gives the analytic rank-one projector onto the $k$-th eigenline, and applying the oriented normalization used in the proof of Lemma~\ref{lem_graded} yields $\widehat\psi_k(T)=q_k(A,\allowbreak c)+\mathrm{O}(T^2)$.

Translation from $[-1/2,\allowbreak 1/2]$ to $[0,\allowbreak 1]$ acts triangularly on polynomials and preserves the determinant of the moment matrix. The Hilbert determinant formula consequently gives
$$
\frac{\det\overline H_k}{\det\overline H_{k-1}}=\dist_{L^2(0,1)}\big(\frac{t^{k-1}}{(k-1)!},\calP_{k-2}\big)^2=\kappa_k,
$$
where $\calP_m$ denotes the polynomials of degree at most $m$ and $\calP_{-1}=\{0\}$. The first equality is the Gram determinant quotient, and the closed expression \eqref{kappa} is the classical Hilbert formula \cite{Choi1983,Szego1975}.

It remains to justify convergent expansions in $T^2$ despite the multiple zero eigenvalue at $T=0$. Lemma~\ref{lem_graded} applies to $\widehat{G}_T(A,\allowbreak c)$, with $(0,\allowbreak 1)$ replaced by $(-1/2,\allowbreak 1/2)$; denote the corresponding normalized matrices by $\widehat N$ and $\widehat R$. All the analyticity assertions then follow, and it only remains to observe that they hold in the variable $T^2$. Set $\widehat\varphi_j(T,\allowbreak s)=T^{-(j-1)}\langle q_j(A,\allowbreak c),\allowbreak e^{TsA}c\rangle$. Expanding the exponential gives $\widehat\varphi_j(-T,\allowbreak -s)=(-1)^{j-1}\widehat\varphi_j(T,\allowbreak s)$, and invariance of the scalar product of $L^2(-1/2,\allowbreak 1/2)$ under $s\mapsto-s$ gives $\widehat N(-T)=\Sigma\widehat N(T)\Sigma$, with $\Sigma=\diag((-1)^{i-1})$. Hence $({\bigwedge}^k\widehat N(-T))_{IJ}=(-1)^{\delta(I)+\delta(J)}({\bigwedge}^k\widehat N(T))_{IJ}$, while $D(-T)_{II}=(-1)^{\delta(I)}D(T)_{II}$. The two sign factors cancel, and therefore $\widehat R(-T)=\widehat R(T)$. It follows that $T^{-k^2}\widehat\lambda_1(T)\cdots\widehat\lambda_k(T)$ and the projectors $\widehat\Pi_k(T)$ are even analytic functions of $T$, hence analytic functions of $T^2$; the same holds for the ratios $T^{-(2k-1)}\widehat\lambda_k(T)$ and the oriented eigenvectors. Together with \eqref{centered_ratio}, this proves all the analytic assertions, uniformly on compact subsets of the set of controllable pairs.
\end{proof}

\subsection{Expansion of the transported Krylov data}

We now apply Proposition~\ref{prop_centered} to the analytic vector $b_T=e^{TA/2}b$ in \eqref{midpoint_identity}. Since $A$ commutes with $e^{TA/2}$, we have $\calK_k(A,\allowbreak b_T)=e^{TA/2}\calK_k(A,\allowbreak b)$.
The centered estimate first gives the more precise intermediate formulas
\begin{equation}\label{midpoint_estimates}
\lambda_k(T)=\kappa_kd_k(A,b_T)^2T^{2k-1}(1+\mathrm{O}(T^2)),\qquad \Pi_k(T)=\Pi_{\calK_k(A,b_T)}+\mathrm{O}(T^2).
\end{equation}

To expand the height, let $V_k=(b,\allowbreak Ab,\allowbreak \ldots,\allowbreak A^{k-1}b)\in\R^{n\times k}$ and let $\nu_k(T)$ be the Euclidean volume of the columns of $e^{TA/2}V_k$, with $\nu_0(T)=1$. Thus
$$
\nu_k(T)^2=\det(V_k^\top e^{TA^\top/2}e^{TA/2}V_k),\qquad d_k(A,b_T)^2=\frac{\nu_k(T)^2}{\nu_{k-1}(T)^2}.
$$
Differentiation of the Gram determinant at $T=0$ gives
$$
\frac{d}{dT}\log\nu_k(T)^2\vert_{T=0}=\sum_{j=1}^k\langle q_j,Aq_j\rangle.
$$
Taking the difference between levels $k$ and $k-1$ yields
\begin{equation}\label{height_expansion}
d_k(A,b_T)^2=d_k(A,b)^2(1+a_kT+\mathrm{O}(T^2)).
\end{equation}
Combining \eqref{midpoint_estimates} and \eqref{height_expansion} proves \eqref{eigenvalue_expansion}.

For the subspaces, let $E$ be fixed and let $E_T=e^{TA/2}E$. Differentiating an orthonormal frame of $E_T$ gives the standard projector identity
$$
\frac{d}{dT}\Pi_{E_T}\vert_{T=0}=\frac{1}{2}(\Pi_E^\perp A\Pi_E+\Pi_EA^\top\Pi_E^\perp).
$$
For $E=\calK_k(A,\allowbreak b)$ and $k\in\{1,\allowbreak \ldots,\allowbreak n-1\}$, the inclusion $A\calK_k(A,\allowbreak b)\subset\calK_{k+1}(A,\allowbreak b)$ and the construction of the orthonormal Krylov basis give
\begin{equation}\label{one_direction}
\Pi_{\calK_k(A,b)}^\perp A\Pi_{\calK_k(A,b)}=\beta_kq_{k+1}q_k^\top.
\end{equation}
Equations \eqref{midpoint_estimates}-\eqref{one_direction} prove \eqref{projector_expansion}. The rank-one projector onto $\psi_k(T)$ is $\Pi_k(T)-\Pi_{k-1}(T)$; differentiating this difference and using the chosen orientation gives \eqref{eigenvector_expansion}.

The analytic assertions for the original Gramian, and their joint character in $(T,\allowbreak A,\allowbreak b)$, follow from Lemma~\ref{lem_graded}. This completes the proof of Theorem~\ref{thm_main}.

\section{Spectral consequences and an exact energy identity}\label{sec_consequences}
We derive the Gaussian profile, the consequences for skew-symmetric and symmetric dynamics, and an exact forward Fokker--Planck energy identity with sharp directional smoothing estimates.

\subsection{Gaussian profile in moving principal coordinates}
Assume throughout this subsection that $(A,\allowbreak b)$ satisfies the Kalman condition. The transition density of the Ornstein-Uhlenbeck process at $y$ is then
$$
p_T(y)=\frac{1}{(2\pi)^{n/2}\det(G_T(A,b))^{1/2}}\exp\big(-\frac{1}{2}\langle G_T(A,b)^{-1}y,y\rangle\big).
$$
Set $\ell_k=\kappa_kd_k(A,\allowbreak b)^2$ and, for $z=(z_1,\allowbreak \ldots,\allowbreak z_n)\in\R^n$, follow the moving principal axes at their natural anisotropic scales:
$$
y_T(z)=\sum_{k=1}^n\ell_k^{1/2}T^{k-1/2}z_k\psi_k(T).
$$
These coordinates depend on $T$. The result below is therefore not a fixed-coordinate Taylor expansion of the transition density, but the profile observed at the natural Kalman scales in the moving eigenbasis of $G_T(A,\allowbreak b)$.

\begin{corollary}[refined Gaussian profile]
Uniformly for $z$ in compact subsets of $\R^n$,
\begin{equation}\label{density_expansion}
T^{n^2/2}\bigg(\prod_{k=1}^n\ell_k\bigg)^{1/2}p_T(y_T(z))=\frac{e^{-\Vert z\Vert^2/2}}{(2\pi)^{n/2}}\bigg(1+\frac{T}{2}\sum_{k=1}^na_k(z_k^2-1)+\mathrm{O}(T^2)\bigg).
\end{equation}
\end{corollary}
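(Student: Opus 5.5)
The plan is to evaluate the quadratic form and the determinant separately, both in the eigenbasis of $G_T(A,b)$, and then to insert the two-term eigenvalue expansion \eqref{eigenvalue_expansion} of Theorem~\ref{thm_main}. The $\psi_k(T)$ form an orthonormal eigenbasis with $G_T\psi_k=\lambda_k\psi_k$, so for $y=y_T(z)$ the quadratic form diagonalizes exactly:
$$
\langle G_T(A,b)^{-1}y_T(z),y_T(z)\rangle=\sum_{k=1}^n\frac{\ell_kT^{2k-1}}{\lambda_k(T)}z_k^2 .
$$
By Theorem~\ref{thm_main}, $\lambda_k(T)/(\ell_kT^{2k-1})=1+a_kT+\mathrm{O}(T^2)$, and this ratio extends analytically and stays bounded away from zero near $T=0$. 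Hence
$$
\frac{\ell_kT^{2k-1}}{\lambda_k(T)}=1-a_kT+\mathrm{O}(T^2).
$$
The quadratic form is therefore $\Vert z\Vert^2-T\sum_ka_kz_k^2+\mathrm{O}(T^2)\Vert z\Vert^2$.

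For the prefactor, I use $\det G_T=\prod_k\lambda_k(T)$ together with $\sum_k(2k-1)=n^2$. This gives
$$
\det G_T=\Big(\prod_k\ell_k\Big)T^{n^2}\Big(1+T\sum_ka_k+\mathrm{O}(T^2)\Big).
$$
It is equivalent to use \eqref{determinant_expansion} with $\sum_ka_k=\tr A$. Taking the inverse square root,
$$
T^{n^2/2}\Big(\prod_k\ell_k\Big)^{1/2}\det G_T^{-1/2}=1-\frac{T}{2}\sum_ka_k+\mathrm{O}(T^2).
$$

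Now combine the two pieces. The exponential factor is
$$
\exp\Big(-\tfrac12\Vert z\Vert^2+\tfrac{T}{2}\sum_ka_kz_k^2+\mathrm{O}(T^2)\Vert z\Vert^2\Big)=e^{-\Vert z\Vert^2/2}\Big(1+\tfrac{T}{2}\sum_ka_kz_k^2+\mathrm{O}(T^2)\Big),
$$
where the remainder is uniform for $z$ in a compact set, since $e^{x}=1+x+\mathrm{O}(x^2)$ for bounded $x$. Multiplying this by the prefactor expansion gives
$$
1+\frac{T}{2}\sum_ka_k(z_k^2-1)+\mathrm{O}(T^2),
$$
which is \eqref{density_expansion}.

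No step is a real obstacle. The one point needing care is uniformity of the $\mathrm{O}(T^2)$ terms in $z$. This follows because the remainders in the eigenvalue ratios do not depend on $z$, and they enter the exponent multiplied by $z_k^2$, which is bounded on compacts.
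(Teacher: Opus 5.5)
Your proposal is correct and follows the paper's route. You diagonalize the quadratic form in the moving eigenbasis, insert the two-term eigenvalue expansion into both the form and $\det G_T=\prod_k\lambda_k(T)$, and then expand the exponential and the inverse square root, with uniformity on compacts coming from the $z$-independent remainders multiplied by $z_k^2$.
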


\begin{proof}
In the eigenbasis of $G_T(A,\allowbreak b)$, Theorem~\ref{thm_main} gives
$$
\langle G_T(A,b)^{-1}y_T(z),y_T(z)\rangle=\sum_{k=1}^nz_k^2(1-a_kT+\mathrm{O}(T^2)).
$$
It also gives $\det G_T(A,\allowbreak b)=(\prod_k\ell_k)T^{n^2}(1+T\sum_ka_k+\mathrm{O}(T^2))$, with $\sum_ka_k=\tr A$. Expanding the exponential and the square root proves \eqref{density_expansion}.
\end{proof}

At $z=0$, the correction is the known aggregate term $-\tr(A)T/2$ \cite{BarilariPaoli2017}, obtained from \eqref{determinant_expansion}. Away from zero, \eqref{density_expansion} resolves it into individual spectral contributions in moving coordinates; \eqref{eigenvector_expansion} describes their first rotation in the fixed Krylov basis.

\subsection{An even expansion for skew-symmetric dynamics}
When $A^\top=-A$, centering does more than simplify the proof: it removes every odd coefficient from the normalized eigenvalues.

\begin{corollary}
Assume that $A^\top=-A$ and that $(A,\allowbreak b)$ satisfies the Kalman condition. Then
\begin{equation}\label{skew_centering}
G_T(A,b)=e^{TA/2}\widehat{G}_T(A,b)e^{-TA/2}.
\end{equation}
Consequently, for every $k\in\{1,\allowbreak \ldots,\allowbreak n\}$, the function $T^{-(2k-1)}\lambda_k(T)$ extends near $T=0$ to a real-analytic function of $T^2$. In particular, as $T\to0^+$,
$$
\lambda_k(T)=\kappa_kd_k(A,b)^2T^{2k-1}(1+\mathrm{O}(T^2)),
$$
and every odd relative coefficient vanishes. Moreover, for $k\in\{1,\allowbreak \ldots,\allowbreak n-1\}$, as $T\to0^+$,
$$
\Pi_k(T)=e^{TA/2}\Pi_{\calK_k(A,b)}e^{-TA/2}+\mathrm{O}(T^2).
$$
\end{corollary}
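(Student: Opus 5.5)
The argument has three parts: an exact conjugation identity, a parity consequence read off from Proposition~\ref{prop_centered}, and a comparison of Krylov data.

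\emph{Conjugation identity.} Start from the midpoint identity \eqref{midpoint_identity}. With $t=s+T/2$,
$$
G_T(A,b)=\int_{-T/2}^{T/2}e^{TA/2}e^{sA}bb^\top e^{sA^\top}e^{TA^\top/2}\,ds .
$$
Since $A^\top=-A$, we have $e^{TA^\top/2}=e^{-TA/2}$, and \eqref{skew_centering} follows at once. The matrix $e^{TA/2}$ is orthogonal, so \eqref{skew_centering} is an orthogonal similarity. Hence $\lambda_k(T)=\widehat\lambda_k(T)$ for $c=b$, and $\Pi_k(T)=e^{TA/2}\widehat\Pi_k(T)e^{-TA/2}$, as long as the eigenvalues are simple and ordered; Lemma~\ref{lem_graded} guarantees this for small $T>0$.

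\emph{Eigenvalue parity.} Apply Proposition~\ref{prop_centered} with $c=b$; the pair $(A,b)$ is controllable by assumption. The proposition states that $T^{-(2k-1)}\widehat\lambda_k(T)$ extends to a real-analytic function of $T^2$ near zero. Transferring this through the equality $\lambda_k=\widehat\lambda_k$ gives the evenness claim, so every odd relative coefficient vanishes. The leading constant is $\kappa_kd_k(A,b)^2$, again by Proposition~\ref{prop_centered}.

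As a consistency check, I would also verify that this agrees with Theorem~\ref{thm_main}: skew-symmetry gives $a_k=\langle q_k,Aq_k\rangle=0$. This check is not needed for the proof, since evenness already kills all odd terms.

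\emph{Projectors.} By \eqref{centered_projectors}, $\widehat\Pi_k(T)=\Pi_{\calK_k(A,b)}+\mathrm{O}(T^2)$. Conjugating by the orthogonal matrix $e^{TA/2}$ preserves operator norms, so the remainder stays $\mathrm{O}(T^2)$ and the stated projector formula follows.

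\emph{Main subtlety.} The step needing care is matching the two spectral decompositions. The ordered eigenvalues coincide under the similarity, but the orientation conventions differ. Theorem~\ref{thm_main} orients $\psi_k$ by $q_k$, while Proposition~\ref{prop_centered} uses $q_k(A,b)$ for the centered covariance. The projector statement avoids orientation entirely, so only the ordering of simple eigenvalues matters, and that holds for small $T>0$.

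I would also remark that this projector formula is consistent with \eqref{projector_expansion}. Expanding $e^{TA/2}\Pi e^{-TA/2}$ to first order gives $\frac{T}{2}[A,\Pi]$. With $A^\top=-A$ and $\Pi=\Pi_{\calK_k(A,b)}$, this equals $\frac T2(\Pi^\perp A\Pi+\Pi A^\top\Pi^\perp)$, by the same projector identity used in the proof of Theorem~\ref{thm_main}; here the within-$\calK_k$ and within-$\calK_k^\perp$ parts of the commutator vanish because $\Pi A\Pi$ and $\Pi^\perp A\Pi^\perp$ commute appropriately with $\Pi$.
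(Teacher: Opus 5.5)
Your proposal is correct and follows the paper's own proof. The substitution $t=s+T/2$ together with $e^{TA^\top/2}=e^{-TA/2}$ gives the orthogonal conjugation; this conjugation preserves the ordered eigenvalues and transports the spectral projectors, and Proposition~\ref{prop_centered} applied with $c=b$ then yields every assertion. Your side checks ($a_k=0$ and agreement of the commutator term with \eqref{projector_expansion}) are correct but not needed.
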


\begin{proof}
Writing $t=s+T/2$ gives $e^{tA}=e^{TA/2}e^{sA}$. Since $e^{TA/2}$ is orthogonal, its transpose is $e^{-TA/2}$, and the centered identity becomes \eqref{skew_centering}. Orthogonal conjugation preserves eigenvalues and transports spectral projectors. Proposition~\ref{prop_centered}, applied with the fixed vector $b$, therefore gives all the assertions.
\end{proof}

Here $a_k=\langle q_k,\allowbreak Aq_k\rangle=0$. The projectors themselves need not be even in fixed coordinates, because they undergo the explicit orthogonal conjugation in \eqref{skew_centering}. This parity is also relevant to the resonant actuator-design problem studied in \cite{TrelatZuazua2026actuator}.

\subsection{An inverse consequence for symmetric dynamics}

For symmetric dynamics, eigenvalue coefficients determine a Jacobi representative, while ambient projector data restore the original coordinates.

Indeed, when $A=A^\top$, its matrix in the Krylov basis is symmetric tridiagonal, with diagonal $a_k$ and adjacent entries $\beta_k$.

\begin{proposition}
Assume that $A=A^\top$ and that $(A,\allowbreak b)$ satisfies the Kalman condition. Define the first two normalized eigenvalue coefficients by
$$
\ell_k=\lim_{T\to0^+}T^{-(2k-1)}\lambda_k(T),\qquad r_k=\lim_{T\to0^+}\frac{1}{T}\big(\frac{\lambda_k(T)}{\ell_kT^{2k-1}}-1\big).
$$
Then $r_k=a_k$ and
\begin{equation}\label{inverse_adjacent}
\beta_k=\sqrt{\frac{\kappa_k\ell_{k+1}}{\kappa_{k+1}\ell_k}},\qquad k\in\{1,\ldots,n-1\}.
\end{equation}
Consequently, the scalar coefficients $(\ell_k,\allowbreak r_k)_{1\leq k\leq n}$ determine the Jacobi representation of the pair $(A,\allowbreak b)$ up to an orthogonal change of coordinates and the unavoidable global sign of $b$.

If the projectors are observed in the original coordinates, set
$$
\Pi_k^0=\lim_{T\to0^+}\Pi_k(T),\qquad \dot\Pi_k^0=\lim_{T\to0^+}\frac{\Pi_k(T)-\Pi_k^0}{T},\qquad \Pi_0^0=0,\qquad \Pi_n^0=\Id.
$$
Then the matrix $A$ itself is reconstructed by
\begin{equation}\label{inverse_matrix}
A=\sum_{k=1}^nr_k(\Pi_k^0-\Pi_{k-1}^0)+2\sum_{k=1}^{n-1}\dot\Pi_k^0.
\end{equation}
Moreover, $\Vert b\Vert=\sqrt{\ell_1}$ and $\Pi_1^0$ determine $b$ up to its global sign, which cannot be recovered from the Gramian.
\end{proposition}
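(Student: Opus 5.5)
The plan is to read everything off Theorem~\ref{thm_main}, using the structure that symmetry imposes on the Krylov representation.

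\emph{Step 1: the scalar coefficients.} By \eqref{eigenvalue_expansion}, $\ell_k=\kappa_kd_k^2$ and $r_k=a_k$. Since $\beta_k=d_{k+1}/d_k$, we get
\[
\beta_k^2=\frac{\ell_{k+1}/\kappa_{k+1}}{\ell_k/\kappa_k},
\]
and positivity of $\beta_k$ gives \eqref{inverse_adjacent}. For $A=A^\top$, the matrix $A_Q=Q^\top AQ$ is upper Hessenberg, as noted in Remark~\ref{rem_minors}, and it is symmetric, hence tridiagonal. Its diagonal is $(a_k)$ and its off-diagonal entries are $\beta_k=\langle q_{k+1},Aq_k\rangle=\langle q_k,Aq_{k+1}\rangle$. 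Moreover $Q^\top b=d_1e_1$ with $d_1=\sqrt{\ell_1}$. So the pair $(Q^\top AQ,\,Q^\top b)$ is determined by $(\ell_k,r_k)$. Any other pair with the same data is orthogonally conjugate to it through its own Krylov frame, up to the sign of $b$. I also note that $(A,-b)$ has the same Gramian, so the sign genuinely cannot be recovered.

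\emph{Step 2: the ambient reconstruction.} From Theorem~\ref{thm_main} we have $\Pi_k^0=\Pi_{\calK_k}$, so $\Pi_k^0-\Pi_{k-1}^0=q_kq_k^\top$. We also have
\[
\dot\Pi_k^0=\tfrac12\beta_k(q_{k+1}q_k^\top+q_kq_{k+1}^\top).
\]
Expanding $A=QA_QQ^\top$ with $A_Q$ tridiagonal gives
\[
A=\sum_k a_kq_kq_k^\top+\sum_{k=1}^{n-1}\beta_k(q_{k+1}q_k^\top+q_kq_{k+1}^\top),
\]
which is exactly \eqref{inverse_matrix} once $r_k=a_k$ is used. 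Finally, $\Pi_1^0=q_1q_1^\top$ with $q_1=b/\Vert b\Vert$, so $\Pi_1^0$ together with $\Vert b\Vert=\sqrt{\ell_1}$ determines $b$ up to sign.

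\emph{Where the care is needed.} There is no real obstacle here. The one point to verify is that symmetry kills the entries above the superdiagonal, which follows from symmetry combined with the Hessenberg structure. The other is the precise meaning of ``determines up to orthogonal change of coordinates'': the coefficients fix the tridiagonal pair in the Krylov basis, and any pair with equal coefficients is therefore conjugate to it by an orthogonal matrix.
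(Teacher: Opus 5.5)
Your proposal is correct and follows the paper's proof essentially step for step. You read off $\ell_k=\kappa_kd_k(A,b)^2$, $r_k=a_k$ and $2\dot\Pi_k^0=\beta_k(q_{k+1}q_k^\top+q_kq_{k+1}^\top)$ from Theorem~\ref{thm_main}, use symmetry together with $A\calK_k(A,b)\subset\calK_{k+1}(A,b)$ (your Hessenberg observation) to get the Jacobi form, sum its diagonal and adjacent parts to obtain \eqref{inverse_matrix}, and justify the sign ambiguity by $G_T(A,b)=G_T(A,-b)$, exactly as the paper does.
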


\begin{proof}
Theorem~\ref{thm_main} gives $\ell_k=\kappa_kd_k(A,\allowbreak b)^2$ and $r_k=a_k$. Since $\beta_k=d_{k+1}(A,\allowbreak b)/d_k(A,\allowbreak b)$, formula \eqref{inverse_adjacent} follows. Symmetry and $A\calK_k(A,\allowbreak b)\subset\calK_{k+1}(A,\allowbreak b)$ imply that the only nonzero off-diagonal entries of $A$ in the basis $(q_1,\allowbreak \ldots,\allowbreak q_n)$ are the adjacent entries $\beta_k$. Thus the coefficients determine the Jacobi matrix with diagonal $(r_k)$ and positive adjacent entries $(\beta_k)$, while $b=\sqrt{\ell_1}e_1$ in these model coordinates. This proves the first assertion.

For the reconstruction in the original coordinates, \eqref{projector_expansion} gives $2\dot\Pi_k^0=\beta_k(q_{k+1}q_k^\top+q_kq_{k+1}^\top)$, and summing the diagonal and adjacent parts gives \eqref{inverse_matrix}. Finally, $\Pi_1^0$ determines only the line $\R q_1$, and $\Vert b\Vert=\sqrt{\ell_1}$ because $\kappa_1=1$. Hence $b$ is reconstructed up to its global sign. This ambiguity is unavoidable, since $G_T(A,\allowbreak b)=G_T(A,\allowbreak -b)$, and it is the only one.
\end{proof}

The moment--orthogonal-polynomial--Jacobi correspondence and its Lanczos realization are classical \cite{GolubMeurant2010}; here the spectral jets supply the recurrence coefficients directly. Eigenvalue coefficients identify the pair up to orthogonal coordinates, and ambient projectors recover those coordinates. For nonsymmetric $A$, the same data give $a_k$ and $\beta_k$ but not the remaining Krylov coefficients, so the first jet does not suffice.

\subsection{An exact energy identity and sharp directional smoothing for the forward equation}
The identity below does not require the Kalman condition.
The Lyapunov equation also gives a time-dependent energy identity for the forward Fokker-Planck equation associated with $dX_t=AX_t\,\allowbreak dt+b\,\allowbreak dW_t$,
\begin{equation}\label{fokker_planck}
\partial_t\rho=-\operatorname{div}(Ax\,\rho)+\frac{1}{2}(b\cdot\nabla)^2\rho.
\end{equation}
The distinction between forward and backward equations is essential. For the backward generator $\calL$ used in the introduction, the analogous multiplier is $G_t(-A,\allowbreak b)$; for the adjoint equation \eqref{fokker_planck}, it is $G_t(A,\allowbreak b)$. For $\theta\geq0$, set
$$
\calE_\theta(t)=\theta\Vert\rho(t)\Vert_{L^2}^2+\int_{\R^n}(\nabla\rho(t))^\top G_t(A,b)\nabla\rho(t)\,dx.
$$

\begin{proposition}\label{prop_energy}
Let $\rho_0$ belong to the Schwartz class and let $\rho$ solve \eqref{fokker_planck}. Then, for every $\theta\geq0$,
\begin{equation}\label{energy_identity}
\begin{aligned}
\frac{d}{dt}\big(e^{t\tr A}\calE_\theta(t)\big)
=-e^{t\tr A}\bigg(& (\theta-1)\Vert(b\cdot\nabla)\rho\Vert_{L^2}^2\\
&+\int_{\R^n}\big((b\cdot\nabla)\nabla\rho\big)^\top G_t(A,b)\big((b\cdot\nabla)\nabla\rho\big)\,dx\bigg).
\end{aligned}
\end{equation}
Consequently, $e^{t\tr A}\calE_\theta(t)$ is nonincreasing for every initial datum when $\theta\geq1$. If $b\ne0$, then $\theta=1$ is the smallest coefficient with this universal monotonicity property.
\end{proposition}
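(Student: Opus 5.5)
The plan is a direct energy computation, carried out separately on the two pieces of $\calE_\theta$. The only structural input is the differential Lyapunov equation $\partial_tG_t=AG_t+G_tA^\top+bb^\top$, $G_0=0$, recalled after \eqref{gramian}; the Kalman condition is never used. Since $\rho_0$ is Schwartz and \eqref{fokker_planck} has polynomial coefficients, $\rho(t)$ remains Schwartz (for instance through the explicit Fourier representation of Gaussian-type solutions), so every integration by parts below has no boundary terms. First I expand the operator, writing $\partial_b=b\cdot\nabla$ and using $\operatorname{div}(Ax\,\rho)=\tr(A)\rho+Ax\cdot\nabla\rho$, so that
$$
\partial_t\rho=-\tr(A)\rho-Ax\cdot\nabla\rho+\tfrac12\partial_b^2\rho .
$$

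\emph{$L^2$ part.} Multiplying by $2\rho$ and integrating, the transport term gives $-2\int\rho\,Ax\cdot\nabla\rho=\tr(A)\Vert\rho\Vert^2$, and the diffusion term gives $-\Vert\partial_b\rho\Vert^2$. Hence
$$
\frac{d}{dt}\Vert\rho\Vert^2=-\tr(A)\Vert\rho\Vert^2-\Vert\partial_b\rho\Vert^2,
$$
and after multiplying by $e^{t\tr A}$ the $\tr(A)$ terms cancel:
$$
\frac{d}{dt}\big(e^{t\tr A}\Vert\rho\Vert^2\big)=-e^{t\tr A}\Vert\partial_b\rho\Vert^2 .
$$

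\emph{Gradient part.} Differentiating the equation in $x$ gives
$$
\partial_t\nabla\rho=-\tr(A)\nabla\rho-A^\top\nabla\rho-(Ax\cdot\nabla)\nabla\rho+\tfrac12\partial_b^2\nabla\rho .
$$
In $\frac{d}{dt}\int\nabla\rho^\top G_t\nabla\rho$ I collect the terms one by one:
\begin{itemize}
\item $\partial_tG_t$ contributes $\int\nabla\rho^\top(AG_t+G_tA^\top)\nabla\rho+\Vert\partial_b\rho\Vert^2$;
\item the $-A^\top\nabla\rho$ term contributes $-2\int\nabla\rho^\top G_tA^\top\nabla\rho$, which after symmetrization exactly cancels the $AG_t+G_tA^\top$ term;
\item the transport term $-(Ax\cdot\nabla)\nabla\rho$ gives, after integrating by parts, $+\tr(A)\int\nabla\rho^\top G_t\nabla\rho$;
\item the $-\tr(A)\nabla\rho$ term gives $-2\tr(A)\int\nabla\rho^\top G_t\nabla\rho$;
\item the diffusion term gives $-\int(\partial_b\nabla\rho)^\top G_t(\partial_b\nabla\rho)$, using that $G_t$ is constant in $x$.
\end{itemize}
Multiplying by $e^{t\tr A}$ again kills the net $-\tr(A)$ factor. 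Adding $\theta$ times the $L^2$ identity then yields \eqref{energy_identity}, with the coefficient $(\theta-1)$ arising from $-\theta\Vert\partial_b\rho\Vert^2+\Vert\partial_b\rho\Vert^2$.

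\emph{Monotonicity and sharpness.} Since $G_t\succeq0$ for $t\geq0$, the right-hand side of \eqref{energy_identity} is nonpositive when $\theta\geq1$, so $e^{t\tr A}\calE_\theta(t)$ is nonincreasing. For sharpness with $\theta<1$ and $b\neq0$, I evaluate \eqref{energy_identity} at $t=0$. There $G_0=0$, so the right-hand side equals $(1-\theta)\Vert\partial_b\rho_0\Vert^2$. This is strictly positive for any Schwartz $\rho_0$ with $\partial_b\rho_0\neq0$, for instance a Gaussian. By continuity in $t$, the weighted energy strictly increases on a short initial interval, so monotonicity fails.

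The main obstacle I expect is bookkeeping rather than ideas. The delicate point is getting the sign and the factor of the transport integration by parts right, namely $\int\nabla\rho^\top G(Ax\cdot\nabla)\nabla\rho=-\tfrac12\tr(A)\int\nabla\rho^\top G\nabla\rho$, and checking that the $A^\top\nabla\rho$ commutator term matches the $AG+GA^\top$ part of the Lyapunov equation (for the forward equation) rather than its sign-flipped version. I would verify both on the scalar case $n=1$ before writing the general computation.
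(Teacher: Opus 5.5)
Your proposal is correct and follows the paper's proof essentially step by step. The shared ingredients are the $L^2$ identity $\frac{d}{dt}\big(e^{t\tr A}\Vert\rho\Vert_{L^2}^2\big)=-e^{t\tr A}\Vert(b\cdot\nabla)\rho\Vert_{L^2}^2$ and the same differentiated equation for $\nabla\rho$. In both, the drift terms cancel through the Lyapunov equation, with $bb^\top$ producing $\Vert(b\cdot\nabla)\rho\Vert_{L^2}^2$, and sharpness comes from evaluating at $t=0$, where $G_0=0$. Your term-by-term bookkeeping (transport giving $+\tr(A)$, the zeroth-order term giving $-2\tr(A)$) just makes explicit what the paper compresses into ``after integration by parts''.
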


\begin{proof}
Set $D_b=b\cdot\nabla$. Integration by parts in \eqref{fokker_planck} gives
$$
\frac{d}{dt}\big(e^{t\tr A}\Vert\rho\Vert_{L^2}^2\big)=-e^{t\tr A}\Vert D_b\rho\Vert_{L^2}^2.
$$
Moreover, differentiating the equation gives
$$
\partial_t\nabla\rho=-A^\top\nabla\rho-(Ax)\cdot\nabla(\nabla\rho)-(\tr A)\nabla\rho+\frac{1}{2}D_b^2\nabla\rho.
$$
After integration by parts, the Lyapunov identity $\partial_tG_t=AG_t+G_tA^\top+bb^\top$ cancels the drift contributions, while the term $bb^\top$ produces $\Vert D_b\rho\Vert_{L^2}^2$. Hence
$$
\frac{d}{dt}\big(e^{t\tr A}\calE_0(t)\big)=e^{t\tr A}\bigg(\Vert D_b\rho\Vert_{L^2}^2-\int_{\R^n}(D_b\nabla\rho)^\top G_t(A,b)(D_b\nabla\rho)\,dx\bigg).
$$
Multiplying the first identity by $\theta$ and adding the second one proves \eqref{energy_identity}. If $0\leq\theta<1$ and $b\ne0$, choose $\rho_0$ such that $D_b\rho_0\ne0$. Since $G_0=0$, the derivative at $t=0$ then equals $(1-\theta)\Vert D_b\rho_0\Vert_{L^2}^2>0$, proving the last assertion. All integrations are justified for Schwartz data; the identity then extends by standard approximation whenever the displayed quantities are finite.
\end{proof}

Identity \eqref{energy_identity} is exact at every time for which the solution and the displayed integrals are defined. Only the layerwise analysis below uses small time. No stability assumption on $A$ or invariant measure is required. The identity is related to time-dependent corrected energies used in hypocoercivity \cite{Herau2007,Villani2009}, but does not by itself assert convergence to equilibrium. When $(A,\allowbreak b)$ is controllable and $0<t<T_0$, its gradient term has the exact spectral decomposition
$$
\int_{\R^n}(\nabla f)^\top G_t(A,b)\nabla f\,dx=\sum_{k=1}^n\lambda_k(t)\Vert\psi_k(t)\cdot\nabla f\Vert_{L^2}^2.
$$
Recall that $Y_k=q_k\cdot\nabla$ and $\beta_0=\beta_n=0$. For a fixed $f$ in the Schwartz class, set
$$
J_k(f)=a_k\Vert Y_kf\Vert_{L^2}^2+\beta_k\langle Y_kf,Y_{k+1}f\rangle_{L^2}-\beta_{k-1}\langle Y_kf,Y_{k-1}f\rangle_{L^2},
$$
where the nonexistent end terms are omitted. Theorem~\ref{thm_main} gives, layer by layer,
$$
\frac{\lambda_k(t)\Vert\psi_k(t)\cdot\nabla f\Vert_{L^2}^2}{\kappa_kd_k(A,b)^2t^{2k-1}}=\Vert Y_kf\Vert_{L^2}^2+tJ_k(f)+\mathrm{O}(t^2)\Vert\nabla f\Vert_{L^2}^2.
$$
Thus $a_k$ corrects the diagonal energy of the $k$-th layer, while the adjacent Krylov ratios couple it to its neighboring layers.

\begin{remark}[Layerwise truncation]\label{rem_layerwise}
The preceding expansions must be truncated relative to each Kalman layer. Since their leading scales are $t^{2k-1}$, terms must not be pooled merely because they have the same absolute power of $t$: the remainder in one layer can have the order of the leading term in the next one. For the double integrator, the order-$t$ rotation of the large spectral branch contributes $t^3/4$ in the position direction, while the leading contribution of the small branch is $t^3/12$; together they recover the exact coefficient $t^3/3$.
\end{remark}

For the double integrator of Example~\ref{ex_integrator}, equation \eqref{fokker_planck} is $\partial_t\rho+v\partial_x\rho=\frac{1}{2}\partial_v^2\rho$ and
$$
\calE_1(t)=\Vert\rho\Vert_{L^2}^2+t\Vert\partial_v\rho\Vert_{L^2}^2+t^2\langle\partial_x\rho,\partial_v\rho\rangle_{L^2}+\frac{t^3}{3}\Vert\partial_x\rho\Vert_{L^2}^2.
$$
The effective velocity coefficient is $(e_v^\top G_t^{-1}e_v)^{-1}=t/4$; the position coefficient is $(e_x^\top G_t^{-1}e_x)^{-1}=t^3/12$.
Monotonicity consequently gives, for instance,
$$
\Vert\partial_v\rho(t)\Vert_{L^2}\leq2t^{-1/2}\Vert\rho_0\Vert_{L^2},\qquad \Vert\partial_x\rho(t)\Vert_{L^2}\leq2\sqrt3\,t^{-3/2}\Vert\rho_0\Vert_{L^2}.
$$
The powers $t,\allowbreak t^2,\allowbreak t^3$ match the familiar short-time weights in corrected energies for the free Kolmogorov model. In general, the matrix Cauchy-Schwarz inequality gives
$$
\vert q_k^\top\nabla\rho\vert^2\leq(q_k^\top G_t(A,b)^{-1}q_k)(\nabla\rho)^\top G_t(A,b)\nabla\rho.
$$
Lemma~\ref{lem_graded} gives $q_k^\top G_t(A,\allowbreak b)^{-1}q_k=t^{-(2k-1)}(N(t,\allowbreak A,\allowbreak b)^{-1})_{kk}$, so that the monotonicity of $e^{t\tr A}\calE_1$ yields, for every $k\in\{1,\allowbreak \ldots,\allowbreak n\}$,
$$
\Vert Y_k\rho(t)\Vert_{L^2}\leq\big((N(t,A,b)^{-1})_{kk}\big)^{1/2}\,t^{-k+1/2}\,e^{-t\tr(A)/2}\,\Vert\rho_0\Vert_{L^2},
$$
the constants converging to those given by the moment matrix $N(0,\allowbreak A,\allowbreak b)$ of Lemma~\ref{lem_graded}. For the double integrator, $N$ does not depend on $t$, with $(N^{-1})_{11}=4$ and $(N^{-1})_{22}=12$, which are exactly the two constants above. These constants follow directly from the energy identity, but are not the exact operator norms. The exact norm and its sharp small-time consequences are as follows.

\begin{corollary}[sharp directional smoothing]
Assume that $(A,\allowbreak b)$ satisfies the Kalman condition, and let $S_t^*$ denote the forward evolution in \eqref{fokker_planck}. For every unit vector $q$ and every $t>0$,
\begin{equation}\label{exact_smoothing_norm}
\Vert(q\cdot\nabla)S_t^*\Vert_{L^2\to L^2}=e^{-1/2}e^{-t\tr(A)/2}(q^\top G_t(A,b)^{-1}q)^{1/2}.
\end{equation}
Consequently, for every $k\in\{1,\allowbreak \ldots,\allowbreak n\}$, as $t\to0^+$, one has, along the moving principal directions,
\begin{equation}\label{moving_smoothing}
\Vert(\psi_k(t)\cdot\nabla)S_t^*\Vert_{L^2\to L^2}=\frac{e^{-1/2}}{\kappa_k^{1/2}d_k(A,b)}t^{-k+1/2}\big(1-\frac{a_k+\tr A}{2}t+\mathrm{O}(t^2)\big),
\end{equation}
whereas in the fixed Krylov directions,
\begin{equation}\label{sharp_krylov_smoothing}
\Vert Y_kS_t^*\Vert_{L^2\to L^2}=e^{-1/2}\frac{((\mathfrak H_n^{-1})_{kk})^{1/2}}{d_k(A,b)}t^{-k+1/2}(1+\mathrm{O}(t)).
\end{equation}
The universal matrix entry in \eqref{sharp_krylov_smoothing} is explicitly
$$
(\mathfrak H_n^{-1})_{kk}=\frac{((n+k-1)!)^2}{(2k-1)((n-k)!)^2((k-1)!)^2}.
$$
\end{corollary}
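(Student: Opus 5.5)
The plan is to compute the forward evolution explicitly in Fourier variables and then read off the operator norm as a supremum of a one-dimensional multiplier. The first step is to write the solution of \eqref{fokker_planck} on the Fourier side. With $\hat\rho(t,\xi)=\int e^{-ix\cdot\xi}\rho(t,x)\,dx$, the transport term becomes $\partial_t\hat\rho=(A^\top\xi)\cdot\nabla_\xi\hat\rho-\frac12(b\cdot\xi)^2\hat\rho$. Integrating along the characteristics $\xi(s)=e^{-sA^\top}\xi$ should give the exact formula
$$
\hat\rho(t,\xi)=\hat\rho_0(e^{tA^\top}\xi)\exp\big(-\tfrac12\xi^\top G_t(A,b)\xi\big).
$$
We will verify the Gaussian factor directly by differentiating and using $\int_0^t e^{sA}bb^\top e^{sA^\top}ds=G_t$. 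The operator $(q\cdot\nabla)S_t^*$ then has Fourier multiplier $i(q\cdot\xi)e^{-\frac12\xi^\top G_t\xi}$, composed with the change of variables $\xi\mapsto e^{tA^\top}\xi$.

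The second step computes the norm. By Plancherel, substituting $\eta=e^{tA^\top}\xi$ gives
$$
\Vert(q\cdot\nabla)S_t^*\rho_0\Vert^2=(2\pi)^{-n}e^{-t\tr A}\int|m(\eta)|^2|\hat\rho_0(\eta)|^2\,d\eta,
$$
where $m(\eta)=(q\cdot e^{-tA^\top}\eta)\,e^{-\frac12\eta^\top\widetilde G\eta}$ and $\widetilde G=e^{-tA}G_te^{-tA^\top}$. The Jacobian is $\det e^{-tA^\top}=e^{-t\tr A}$. The operator norm is therefore $e^{-t\tr A/2}\sup_\eta|m|$.

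To evaluate the supremum, set $p=e^{-tA}q$ and $\zeta=\widetilde G^{1/2}\eta$. Then $|m|=|\langle \widetilde G^{-1/2}p,\zeta\rangle|e^{-|\zeta|^2/2}$, whose supremum is $|\widetilde G^{-1/2}p|\sup_{r>0}re^{-r^2/2}=e^{-1/2}(p^\top\widetilde G^{-1}p)^{1/2}$. Since $p^\top\widetilde G^{-1}p=q^\top G_t^{-1}q$, this gives \eqref{exact_smoothing_norm}. The Kalman condition is needed here, because it is what guarantees $G_t\succ0$.

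The last step derives the asymptotics. For \eqref{moving_smoothing}, take $q=\psi_k(t)$, so that $q^\top G_t^{-1}q=\lambda_k(t)^{-1}$. Theorem~\ref{thm_main} gives $\lambda_k^{-1/2}=(\kappa_kd_k^2)^{-1/2}t^{-k+1/2}(1-\frac{a_k}{2}t+\mathrm{O}(t^2))$, and multiplying by $e^{-t\tr A/2}=1-\frac{\tr A}{2}t+\mathrm{O}(t^2)$ yields the stated expansion.

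For \eqref{sharp_krylov_smoothing}, Lemma~\ref{lem_graded} gives $q_k^\top G_t^{-1}q_k=t^{-(2k-1)}(N(t)^{-1})_{kk}$ with $N(0)=D_d\mathfrak H_nD_d$. Hence $(N(0)^{-1})_{kk}=(\mathfrak H_n^{-1})_{kk}/d_k^2$, and analyticity of $N$ gives the $1+\mathrm{O}(t)$ remainder.

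The remaining work, and the main obstacle, is the closed form of $(\mathfrak H_n^{-1})_{kk}$. Write $\mathfrak H_n=D_f H D_f$, where $H$ is the Hilbert matrix $(1/(i+j-1))$ and $D_f=\diag(1/(i-1)!)$. Then $(\mathfrak H_n^{-1})_{kk}=((k-1)!)^2(H^{-1})_{kk}$. I would insert the classical diagonal entry of the inverse Hilbert matrix,
$$
(H^{-1})_{kk}=(2k-1)\binom{n+k-1}{n-k}^2\binom{2k-2}{k-1}^2,
$$
and simplify the factorials. The risk in this step is bookkeeping the factorials without error, so the simplified formula should be checked against the paper's values $4$ and $12$ for $n=2$.
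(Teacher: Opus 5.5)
Your proposal is correct and follows the paper's proof essentially step by step. The shared steps are the Fourier representation $\widehat\rho(t,\xi)=e^{-\frac12\xi^\top G_t(A,b)\xi}\widehat\rho_0(e^{tA^\top}\xi)$, Plancherel with the substitution $\eta=e^{tA^\top}\xi$ (your $\widetilde G=e^{-tA}G_te^{-tA^\top}$ is exactly the paper's $G_t(-A,b)$), the reduction to $\sup_{r\geq0}re^{-r^2/2}=e^{-1/2}$, and then Theorem~\ref{thm_main}, Lemma~\ref{lem_graded} and the classical inverse Hilbert formula for the asymptotics. Your factorial simplification also checks out: $((k-1)!)^2(2k-1)\binom{n+k-1}{n-k}^2\binom{2k-2}{k-1}^2$ reduces exactly to the stated closed form, which gives $4$ and $12$ for $n=2$.
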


\begin{proof}
In Fourier variables,
$$
\widehat\rho(t,\xi)=e^{-\frac{1}{2}\xi^\top G_t(A,b)\xi}\widehat\rho_0(e^{tA^\top}\xi).
$$
Using Plancherel's identity, the change of variables $\eta=e^{tA^\top}\xi$ and the identity $e^{-tA}G_t(A,\allowbreak b)e^{-tA^\top}=G_t(-A,\allowbreak b)$, the operator norm in \eqref{exact_smoothing_norm} is $e^{-t\tr(A)/2}$ times
$$
\sup_{\eta\in\R^n}\vert\langle e^{-tA}q,\eta\rangle\vert e^{-\frac{1}{2}\eta^\top G_t(-A,b)\eta}.
$$
Writing $\eta=G_t(-A,\allowbreak b)^{-1/2}w$ and using $\sup_{r\geq0}re^{-r^2/2}=e^{-1/2}$ reduces this supremum to
$$
e^{-1/2}((e^{-tA}q)^\top G_t(-A,b)^{-1}(e^{-tA}q))^{1/2}.
$$
The quadratic form inside the square root equals $q^\top G_t(A,\allowbreak b)^{-1}q$, which proves \eqref{exact_smoothing_norm}.

Taking $q=\psi_k(t)$ in \eqref{exact_smoothing_norm} and using $\psi_k(t)^\top G_t(A,\allowbreak b)^{-1}\psi_k(t)=\lambda_k(t)^{-1}$ together with \eqref{eigenvalue_expansion} gives \eqref{moving_smoothing}. For the fixed direction $q_k$, equations \eqref{graded_factorization} and \eqref{moment_factorization} give
$$
q_k^\top G_t(A,b)^{-1}q_k=t^{-(2k-1)}(N(t,A,b)^{-1})_{kk}=\frac{(\mathfrak H_n^{-1})_{kk}}{d_k(A,b)^2}t^{-(2k-1)}(1+\mathrm{O}(t)).
$$
This proves \eqref{sharp_krylov_smoothing}. Let $D_f=\diag(1/0!,\allowbreak \ldots,\allowbreak 1/(n-1)!)$. The scaled and standard Hilbert matrices are related by
$\mathfrak H_n=D_f\big(1/(i+j-1)\big)_{1\leq i,\allowbreak j\leq n}D_f$.
The closed formula therefore follows by applying the classical inverse Hilbert formula \cite{Choi1983} to the middle matrix.
\end{proof}

Since $q^\top G_t(A,\allowbreak b)^{-1}q$ is the minimum energy needed to steer the origin to $q$ in time $t$, formula \eqref{exact_smoothing_norm} identifies the directional smoothing norm with $e^{-1/2}e^{-t\tr(A)/2}$ times the square root of the control cost in the same direction. Taking the supremum over unit directions gives the exact identity
$$
\sup_{\Vert y\Vert=1}\inf_{\substack{u\in L^2(0,t)\\x(0)=0,\ x(t)=y}}\int_0^t\vert u(\tau)\vert^2d\tau=e^{1+t\tr(A)}\bigg(\sup_{\Vert q\Vert=1}\Vert(q\cdot\nabla)S_t^*\Vert_{L^2\to L^2}\bigg)^2.
$$
For $0<t<T_0$, both suprema are attained in the weakest spectral direction $q=y=\psi_n(t)$, and \eqref{control_energy} is the corresponding small-time expansion of the left-hand side. This exact dictionary explains why the smoothing blow-up exponents are one half of the corresponding control-energy exponents.

The convergence $\psi_k(t)\to q_k$ does not make the two leading constants coincide, because the inverse-Gramian metric becomes singular as $t\to0^+$. Indeed, the ratio of the fixed-direction leading norm constant in \eqref{sharp_krylov_smoothing} to the moving-direction one in \eqref{moving_smoothing} is
$$
\big(\kappa_k(\mathfrak H_n^{-1})_{kk}\big)^{1/2}=\binom{n+k-1}{n-k}.
$$
It equals $n$ for $k=1$ and $1$ precisely when $k=n$; it is larger than $1$ for every $k<n$. In the latter case, the vanishing components of $q_k$ along the weaker moving spectral branches are amplified by the inverse eigenvalues and remain visible in the leading coefficient; this is the inverse-Gramian counterpart of the layerwise truncation in Remark~\ref{rem_layerwise}. In the last layer no weaker branch is present, consistently with $(\mathfrak H_n^{-1})_{nn}=1/\kappa_n$.

Comparison of \eqref{sharp_krylov_smoothing} with the bound obtained above from the monotonicity of $e^{t\tr A}\calE_1$ shows that the powers $t^{-k+1/2}$ are optimal and that the constants produced by monotonicity are larger by the factor $e^{1/2}$. Related matrix-valued constructions in \cite{ArnoldErb} contain diagonal and adjacent monomials that also occur in the Taylor expansion of $G_t$, with coefficients selected through a finite absorption argument. We record \eqref{energy_identity} as an exact consequence of the linear Lyapunov structure and as a useful interpretation of the spectral coefficients, not as a general hypocoercivity principle.

\begin{example}[damped Langevin dynamics]
Let
$$
A=\begin{pmatrix}0&1\\-\omega^2&-\gamma\end{pmatrix},\qquad b=e_2,
$$
where $\omega>0$ and $\gamma\geq0$. The orthonormal Krylov basis is $(e_2,\allowbreak e_1)$, and $a_1=-\gamma$, $a_2=0$, $\beta_1=1$. Thus
$$
\lambda_1(T)=T(1-\gamma T+\mathrm{O}(T^2)),\qquad \lambda_2(T)=\frac{T^3}{12}(1+\mathrm{O}(T^2)),
$$
as $T\to 0^+$, while the two principal directions have the same first-order rotation as in Example~\ref{ex_integrator}. Since $a_1=-\gamma$ and $a_2=0$, damping is detected at first order by the large covariance axis but not by the small one, whereas the frequency $\omega$ enters only at higher order.
\end{example}

\section{Conclusion and perspectives}\label{sec_conclusion}
Theorem~\ref{thm_main} refines the known spectral powers by explicit first corrections to eigenvalues and eigendirections, with analytic normalized data. These coefficients refine control energy, Gaussian profiles and directional smoothing, and reconstruct symmetric dynamics: $a_k$ describes the action within each Krylov direction and $\beta_k$ the transfer to the next layer.

\smallskip
\paragraph{Higher-order coefficients}
Theorem~\ref{thm_main} gives convergent expansions whose higher coefficients can be computed from the orbit Taylor series and analytic perturbation equations; \eqref{minor_identity} and Remark~\ref{rem_minors} give a route through Gram determinants. At relative order $T^2$, nonadjacent Krylov coefficients, the even centered correction and the discarded second-order perturbation terms interact. We do not know a closed invariant formula as simple as \eqref{eigenvalue_expansion}--\eqref{eigenvector_expansion}. The first-order term is special: for fixed $c$, $T^{-1}\widehat G_T(A,c)$ and its normalized spectral data are even in $T$, so that term arises entirely from $b\mapsto e^{TA/2}b$.

\smallskip
\paragraph{Addition of a scalar drift}
Replacing $A$ by $A+\zeta\Id$, with $\zeta\in\R$, leaves all Krylov spaces and all coefficients $\beta_k$ unchanged and replaces every $a_k$ by $a_k+\zeta$. Hence each eigenvalue acquires the same relative factor $1+\zeta T+\mathrm{O}(T^2)$, while the first variation of the spectral flag is unchanged. This is consistent with the scalar factor $e^{\zeta t}$ in the orbit.

\smallskip
\paragraph{Multi-input case}
For a controllable system $\dot x=Ax+Bu$ with $A\in\R^{n\times n}$ and $B\in\R^{n\times m}$, set $\calF_j=\operatorname{Ran}(B,\allowbreak AB,\allowbreak \ldots,\allowbreak A^jB)$ for $j\geq0$, with $\calF_{-1}=\{0\}$. The dimensions $(\dim\calF_j)_j$ form the growth vector; equivalently, the increments $m_j=\dim\calF_j-\dim\calF_{j-1}$ determine the multiplicities of the successive layers. The leading eigenvalues form blocks: the exponent $2j+1$ occurs with multiplicity $m_j$. This leading block structure is classical and fits the graded perturbation framework of \cite{UsevichBarthelme2026}. Extending Theorem~\ref{thm_main} is substantially more delicate: the scalar heights are replaced by positive forms on quotient spaces, eigenvalues inside a layer need not be simple, and their first projectors may depend on internal spectral splittings. A coordinate-free two-term block formula is therefore a separate problem, rather than a routine extension of the present proof.

\smallskip
\paragraph{Growing dimension}
The argument does not control a joint limit $T\to0^+$ and $n\to+\infty$: both $\kappa_n$ and the last Krylov height may deteriorate with dimension, and compact-family uniformity gives no bound uniform in $n$. Such a joint limit would require observability and fast-control estimates uniform in the discretization.

\begingroup
\footnotesize
\section*{Acknowledgements}
The second author was partially supported by the European Research Council (ERC) through the European Union's Horizon Europe programme (ERC Advanced Grant CoDeFeL, grant agreement No.~101096251); the Air Force Office of Scientific Research under award No.~FA8655-24-1-7027; the Alexander von Humboldt Professorship; the European Union's Horizon Europe MSCA Doctoral Network ModConFlex (grant agreement No.~101073558); the Research Council of Norway through SURE-AI, grant No.~357482; and Grant PID2023-146872OB-I00 (DyCMaMod), funded by MICIU/AEI/10.13039/501100011033 and by ERDF/EU\@. This article is based upon work from COST Actions CA24122 (mSPACE) and CA24136 (InterCoML), supported by COST (European Cooperation in Science and Technology). 
\par\smallskip
\noindent
AI tools (ChatGPT and Claude) were used solely to improve the exposition, check the bibliography, and compare versions. All scientific ideas, results, proofs, and initial drafts are the authors' own. The authors assume responsibility for all content.
\par
\endgroup

\end{document}